
\documentclass[reqno]{amsart}
\usepackage{amssymb}
\usepackage{amsfonts}
\usepackage{graphicx}
\usepackage{amssymb}
\usepackage{amsmath}
\usepackage{bm}
\usepackage{color}
\usepackage{enumerate}
\usepackage{subfigure}
\usepackage{epstopdf}
\usepackage{setspace}
\usepackage{amssymb}
\usepackage{amsmath}
\usepackage{caption}

\setcounter{MaxMatrixCols}{10}

\newtheorem{theorem}{Theorem}
\theoremstyle{plain}

\newtheorem{claim}{Claim}

\newtheorem{example}{Example}

\newtheorem{lemma}{Lemma}

\numberwithin{equation}{section}

\begin{document}
\title[Arithmetic representations of real numbers]{Arithmetic
representations of real numbers in terms of self-similar sets}

\begin{abstract}
Suppose $n\geq 2$ and $\mathcal{A}_{i}\subset \{0,1,\cdots ,(n-1)\}$ for $%
i=1,\cdots ,l,$ let $K_{i}=\bigcup\nolimits_{a\in \mathcal{A}%
_{i}}n^{-1}(K_{i}+a)$ be self-similar sets contained in $[0,1].$ Given $%
m_{1},\cdots ,m_{l}\in \mathbb{Z}$ with $\prod\nolimits_{i}m_{i}\neq 0,$ we
let%
\begin{equation*}
S_{x}=\left\{ \mathbf{(}y_{1},\cdots ,y_{l}\mathbf{)}:m_{1}y_{1}+\cdots
+m_{l}y_{l}=x\text{ with }y_{i}\in K_{i}\text{ }\forall i\right\} .
\end{equation*}%
In this paper, we analyze the Hausdorff dimension and Hausdorff measure of
the following set
\begin{equation*}
U_{r}=\{x:\mathbf{Card}(S_{x})=r\},
\end{equation*}%
where $\mathbf{Card}(S_{x})$ denotes the cardinality of $S_{x}$, and $r\in
\mathbb{N}^{+}$. We prove under the so-called covering condition that the
Hausdorff dimension of $U_{1}$ can be calculated in terms of some matrix.
Moreover, if $r\geq 2$, we also give some sufficient conditions such that
the Hausdorff dimension of $U_{r}$ takes only finite values, and these
values can be calculated explicitly. Furthermore, we come up with some
sufficient conditions such that the dimensional Hausdorff measure of $U_{r}$
is infinity. Various examples are provided. Our results can be viewed as the
exceptional results for the classical slicing problem in geometric measure
theory.
\end{abstract}
\author{kan Jiang}
\address{Department of Mathematics, Ningbo University, Ningbo 315211, P. R.
China}
\email{jiangkan@nbu.edu.cn; kanjiangbunnik@yahoo.com}
\author{lifeng Xi}
\address{Department of Mathematics, Ningbo University, Ningbo 315211, P. R.
China}
\email{xilifeng@nbu.edu.cn;  xilifengningbo@yahoo.com}
\thanks{Lifeng Xi is the corresponding author. The work is supported by National Natural Science Foundation of China (Nos.
11831007, 11771226, 11701302, 11371329, 11471124, 11671147). The work is
also supported by K.C. Wong Magna Fund in Ningbo University.}
\subjclass[2000]{Primary 28A80}
\keywords{Fractal, self-similar set, unique representation, section, projection}

\maketitle

\section{Introduction}

Representation of real numbers is a topic of great interest in number
theory. There are many approaches which can represent real numbers, for
instance, the $\beta$-expansions \cite{KO,Akiyama,SN,BakerG,KM,Renyi}, the
continued fractions \cite{KarmaOomen,KCor}, multiplication (division,
quadratic sum) on fractal sets \cite{XiKan1,Tyson}, the {L}\"uroth
expansions \cite{KarmaCor1}, and so forth. These representations are related
to many different mathematical aspects, for instance, the matrix theory,
ergodic theory, fractal geometry, Diophantine approximation, combinatorics,
and so on. Different representations have distinct properties from various
aspects. Investigating the ergodic, fractal and combinatorial properties of
these representations consists of one of the main directions in number
theory.

Expansions in non-integer bases were pioneered by R\'{e}nyi \cite{Renyi} and
Parry \cite{Parry}. Let $1<\beta <2$. Given any $x\in \lbrack 0,(\beta
-1)^{-1}]$, a sequence $(a_{n})\in \{0,1\}^{\mathbb{\infty }}$ is called a $%
\beta $-expansion of $x$ if $x=\sum_{n=1}^{\infty }a_{n}\beta ^{-n}.$
Sidorov \cite{Sidorov} proved that given any $1<\beta <2$, then almost every
point in $[0,(\beta -1)^{-1}]$ has uncountably many expansions. In fact,
Sidorov \cite{Sidorov2003}, Dajani and de Vries \cite{DajaniDeVrie} proved
that given any $1<\beta <2$, then Lebesgue almost every point has
uncountably many universal expansions. Erd\H{o}s et al. \cite{EHJ} proved
that there exist some $1<\beta <2$ and $x\in \lbrack 0,(\beta -1)^{-1}]$
such that $x$ has precisely $k$ different $\beta $-expansions. However, up
to now, there are few results concerning with the set of points with exactly
$k$ different $\beta $-expansions, see \cite{DJKL, KarmaKan2} and references
therein. In particular, if $x$ has a unique expansion (such point is called
the univoque point), then there are many results \cite{MK,GS,KKLL2017}. The
main reason is that we may give an criterion of the unique expansions. As
such we can discuss the set of points with unique expansions from the
symbolic point of view.

Representation of real numbers also arises naturally from Diophantine
approximation. Let $x$ be a real number. We say that $x$ is badly
approximable if there exists a positive integer $n$ such that for any
rational number $p/q$,
\begin{equation*}
|x-p/q|>(nq^{2})^{-1}.
\end{equation*}
Clearly, the set of badly approximable points is of Lebesgue measure zero.
However, Hall \cite{Hall} proved that every real number can be expressed as
the sum of two badly approximable numbers. For any $m\geq 2$, let $F(m)$
denote the set of numbers
\begin{equation*}
F(m)=\{[t,a_1,a_2,\cdots, ]:t\in \mathbb{Z}, 1\leq a_i\leq m \mbox{ for }
i\leq 1\}.
\end{equation*}
Hall \cite{Hall} proved that $F(4)+F(4)=\mathbb{R},$ where $A\pm B=\{x\pm
y:x\in A, y\in B\}$. Astels \cite{Astels} proved that
\begin{equation*}
F(2)+F(5)= \mathbb{R},F(2)-F(5)= \mathbb{R},F(3)-F(4)= \mathbb{R}.
\end{equation*}
There are many other related results for the arithmetic sum of $F(m)$, see
the references in \cite{Hall,Astels}.

The above two representations are prevalent in number theory. In this paper
we shall consider the so-called arithmetic representation of the real
numbers. First, we introduce some background of this representation. Given
two non-empty sets $A,B\subset \mathbb{R}$. Define $A\ast B=\{x\ast y:x\in
A,y\in B\}$, where $\ast $ is $+,-,\times $ or $\div $ (when $\ast =\div $, $%
y\neq 0$). We call $a=x\ast y,x\in A,y\in B$ an arithmetic representation in
terms of $A$ and $B$. Shortly, we may say $a=x\ast y$ is an arithmetic
representation if there is no fear of ambiguity. It is well-known that for
the middle-third Cantor set $C$,
\begin{equation*}
C-C=\{x-y:x,y\in C\}=[-1,1].
\end{equation*}%
Therefore, for any $a\in \lbrack -1,1]$, there are some $x,y\in C$ such that
$a=x-y$. The first proof of this result is due to Hugo Steinhaus \cite{HS}
in 1917. Recently, Athreya, Reznick, and Tyson \cite{Tyson} considered the
multiplication on the middle-third Cantor set, and proved that $17/21\leq \mathcal{L}(C\cdot C)\leq 8/9,$ where $\mathcal{L}$ denotes
the Lebesgue measure. Moreover, they also proved that the division on $C$,
denoted by $\dfrac{C}{C}$, is exactly the union of some closed sets. In \cite%
{XiKan1}, Tian et al. defined a class of overlapping self-similar sets as
follows: let $K$ be the attractor of the IFS
\begin{equation*}
\{f_{1}(x)=\lambda x,f_{2}(x)=\lambda x+c-\lambda ,f_{3}(x)=\lambda
x+1-\lambda \},
\end{equation*}%
where $f_{1}(I)\cap f_{2}(I)\neq \emptyset ,(f_{1}(I)\cup f_{2}(I))\cap
f_{3}(I)=\emptyset ,$ and $I=[0,1]$ is the convex hull of $K$. Then $K\cdot
K=[0,1]$ if and only if $(1-\lambda )^{2}\leq c$. Equivalently, they gave a
necessary and sufficient condition such that for any $x\in \lbrack 0,1]$
there exist some $y,z\in K$ such that $x=yz$.

Motivated by the multiple $%
\beta $-expansions, generally it is natural to analyze the set of points in $%
A\ast B=\{x\ast y:x\in A,y\in B\}$ such that these points have exactly $r$
different representations, i.e. we want to analyze the following set
\begin{equation}
U_{r}=\{x\in A\ast B:x\mbox{ has  exactly }r%
\mbox{  arithmetic
representations}\},  \label{U}
\end{equation}%
where $r\in \mathbb{N}^{+}$. In this paper, we assume the above algorithm $%
\ast $ is $+$ or $-$. For the classical middle-third Cantor set, take $%
1/3\in C-C=[-1,1]$, it is not difficult to prove that
$1/3$ has only three arithmetic representations in $C-C$, i.e.
\begin{equation*}
\dfrac{1}{3}=\dfrac{1}{3}-0=\dfrac{2}{3}-\dfrac{1}{3}=1-\dfrac{2}{3}.
\end{equation*}%
We may give a new explanation for Steinhaus' result from the projectional
perspective. Note that
$C-C$ is congruent to $\sqrt{2}\text{Proj}_{\theta }(C\times C)$,
where $\theta =\dfrac{3\pi }{4}$ and  Proj$_{\theta }$ denotes the orthogonal
projection onto $L_{\theta }$ which is  the line through
the origin in direction $\theta $. Since $C-C=[-1,1]$, it follows that
\begin{equation*}
\dim _{H}(\text{Proj}_{\theta }(C\times C))=\min \{\dim
_{H}(C)+\dim _{H}(C),1\}.
\end{equation*}%
In other words, the orthogonal projection of $C\times C$ to the line $y=-x$
does not drop the expected dimension. Indeed, similar result is still
correct for a general class of self-similar sets. Peres and Shmerkin \cite%
{PS}, Hochman and Shmerkin \cite{Hochman2012} proved the following elegant
result.

Let $K_1$ and $K_2$ be two self-similar sets with IFS's $\{f_i(x)=r_ix+a_i%
\}_{i=1}^{n}$ and $\{g_j(x)=r_j^{\prime}x+b_j\}_{j=1}^{m}$, respectively. If
 there are some  $r_i, r_j^{\prime}$ such that
\begin{equation*}
\dfrac{\log |r_i| }{\log |r_j^{\prime}|}\notin \mathbb{Q},
\end{equation*}
then
\begin{equation*}
\dim_{H}(K_1+K_2)=\min\{\dim_{H}(K_1)+\dim_{H}(K_2),1\},
\end{equation*}
and
$
\dim_{H}(K_1+K_2)=\dim_{P}(K_1+K_2)=\dim_{B}(K_1+K_2).
$
The condition in the above result is called the irrationality condition. The
result above indeed states that under the irrationality condition, the
Hausdorff dimension of the projection of two self-similar sets through the
angle $\pi/4$ does not decrease.

Now we go back to the middle-third Cantor set, and consider a slicing
problem, i.e. given $t\in \lbrack -1,1]$, then the set $U_{r}$ in (\ref{U}),
is
\begin{equation*}
U_{r}=\{t\in \lbrack -1,1]:\mathbf{Card}(\{y-x=t\}\cap (C\times C))=r\},
\end{equation*}%
where $r\in \mathbb{N}^{+}$. In other words, the multiple representational
problem is indeed a slicing problem in geometric measure theory. In this
paper, we shall consider the arithmetic addition or subtraction for more
than two Cantor sets. First, we give some basic definitions.

Suppose $n\geq 2$ and $\mathcal{A}_{i}\subset \{0,1,\cdots ,(n-1)\}$ for $%
i=1,\cdots ,l,$ let%
\begin{equation*}
K_{i}=\bigcup\nolimits_{a\in \mathcal{A}_{i}}\frac{K_{i}+a}{n}
\end{equation*}%
be self-similar sets contained in $[0,1].$ Fix $\mathbf{m}=(m_{1},\cdots
,m_{l})\in \mathbb{Z}^{l}$ with $\prod\nolimits_{i}m_{i}\neq 0$ denote
\begin{equation*}
S_{x}=\left\{ \mathbf{y}\in \prod\nolimits_{i=1}^{l}K_{i}:(\mathbf{m},%
\mathbf{y})=m_{1}y_{1}+\cdots +m_{l}y_{l}=x\right\} .
\end{equation*}

In this paper, we will focus on the fractal dimension of
\begin{equation*}
U_{r}=\{x:\text{\textbf{Card}}(S_{x})=r\}\text{ for }r<\infty ,
\end{equation*}%
It is worthwhile pointing out that if \textbf{Card}$(S_{x})=1$ then there is
a \emph{unique solution} for the equation
\begin{equation*}
x=m_{1}y_{1}+\cdots +m_{l}y_{l}\text{ with }y_{i}\in K_{i}\text{ }\forall i.
\end{equation*}

\medskip

Let $m_{\ast }=\sum\nolimits_{m_{i}<0}m_{i}$ and $m^{\ast
}=\sum\nolimits_{m_{i}>0}m_{i}$ where $\sum\nolimits_{b\in \emptyset }b=0.$
Given a subset $B$ of $\mathbb{R}^{l},$ we write $(\mathbf{m},B)=\cup _{b\in
B}(\mathbf{m},b).$ For $\mathbf{i}=(i_{1},\cdots ,i_{l})\in \prod_{i=1}^{l}%
\mathcal{A}_{i}$ and we write the small cube $\mathbf{c}_{\mathbf{i}}=\frac{%
(i_{1},\cdots ,i_{l})+[0,1]^{l}}{n}$. We say that the \textbf{covering
condition} holds for $\prod_{i=1}^{l}K_{i}$ with respect to $\mathbf{m}$, if
\begin{equation*}
\bigcup\nolimits_{\mathbf{i}\in \prod_{i=1}^{l}\mathcal{A}_{i}}(\mathbf{m},%
\mathbf{c}_{\mathbf{i}})=(\mathbf{m},[0,1]^{l}),
\end{equation*}%
where $(\mathbf{m},[0,1]^{l})=[m_{\ast },m^{\ast }]$ and $(\mathbf{m},%
\mathbf{c}_{\mathbf{i}})=n^{-1}((\mathbf{m},\mathbf{i})+[m_{\ast },m^{\ast
}]).$

We call $I=n^{-1}[u,u+1]\subset \lbrack m_{\ast },m^{\ast }]$ with $u\in
\mathbb{Z}$ an integer interval, and $\{J_{t}=[t,t+1]\}_{t\in \lbrack
m_{\ast },m^{\ast })\cap \mathbb{Z}}$ working intervals. We say $I$ is of
type $t$ (with respect to the small cube $c_{\mathbf{i}}$), if $u-(\mathbf{m}%
,\mathbf{i})=t\in \lbrack m_{\ast },m^{\ast }-1]\cap \mathbb{Z}$ for some $%
\mathbf{i}\in \prod_{i=1}^{l}\mathcal{A}_{i},$ i.e., $t$ is the relative
position of $I$ according to the projection interval $(\mathbf{m},\mathbf{c}%
_{\mathbf{i}})=n^{-1}((\mathbf{m},\mathbf{i})+[m_{\ast },m^{\ast }])$\ of
the small cube $\mathbf{c}_{\mathbf{i}}$. For $t\in \lbrack m_{\ast
},m^{\ast })\cap \mathbb{Z},$ the corresponding geometric type is $%
[0,1]^{l}\cap \{\mathbf{y}:(\mathbf{m},\mathbf{y})\in J_{t}\}$ or its
similar copy. Two integer intervals$\ I_{1}=n^{-1}[u_{1},u_{1}+1]$ and $%
I_{2}=n^{-1}[u_{2},u_{2}+1]$ are said to be congruent modulo $n,$ if $%
u_{1}\equiv u_{2}($mod $n),$ i.e., $I_{2}=I_{1}+k$ for some $k\in \mathbb{Z}%
. $

For a directed graph, we give a partial order on its strongly connected
components $\{H_{i}\}_{i}$, we denote $H\prec H^{\prime },$ if $H=H^{\prime
} $ or there is a directed path from one vertex of $H$ to another vertex of $%
H^{\prime }.$ Let $\rho (H_{i})$ be the spectral radius of the matrix with
respect to the subgraph restricted in $H_{i}.$ We say an infinite sequence $%
v_{i_{1}}v_{i_{2}}\cdots v_{i_{k}}v_{i_{k+1}}\cdots $\ of vertexes is
admissible if there is a directed edge from $v_{i_{k}}$ to $v_{i_{k+1}}$ for
all $k.$

In this paper, we need three directed graphs.

(1) The first graph has the vertex set of all integer intervals. For two
vertexes (or integer intervals)$\ I_{1}$ and $I_{2},$ there is a directed
edge from $I_{1}$ to $I_{2},$ denoted by $I_{1}\rightarrow I_{2},$ if and
only if there exists an integer $t\in \lbrack m_{\ast },m^{\ast }-1]$ such
that $I_{1}$ is of type $t$ and $I_{2}\subset J_{t}.$ We denote $%
I\rightarrowtail J_{t}$ if the integer interval $I$ is of type $t.$

(2) The second graph is a subgraph of the first. Let $\Xi $\ denote the
collection of integer intervals $I$ such that there is a unique $\mathbf{i}%
\in \prod_{i=1}^{l}\mathcal{A}_{i}$ satisfying $I\subset (\mathbf{m},\mathbf{%
c}_{\mathbf{i}}).$ Then we obtain a directed subgraph $G_{\Xi }$ of the
first one. From this directed graph, we obtain a $0$-$1$ transition matrix $%
M $ with its spectral radius $\rho (M)$. Denote by $\{\Xi _{i}\}_{i}$\ the
strongly connected components of $G_{\Xi }.$ In the graph $G_{\Xi },$ we say
that a strongly connected component $\Xi _{i}$ can reach $J_{t},$ if there
is a directed path in $G_{\Xi }$ from one vertex of $\Xi _{i}$ to another
vertex of type $t.$ We also say that $[m_{\ast },m^{\ast }]$ is dominated by
$\Xi $ for $d>0,$ if each $J_{t}$ can be reached by some $\Xi _{i}$ with$%
\frac{\log \rho (M_{i})}{\log n}\geq d.$

(3) The third graph $G^{\ast }$ contains $G_{\Xi }$. A subset $\omega $\ of $%
\Xi $ is said to be congruent, if any two of $\omega $ are congruent. For
congruent subset $\omega ,$ let
\begin{equation*}
\mathcal{D}(\omega )=\{t:\text{there is an element of }\omega \text{
contained in }J_{t}=[t,t+1]\}.
\end{equation*}%
Let the vertex set of $G^{\ast }$ be the collection of all congruent subsets
of $\Xi .$ Then there is a directed edge from $\omega $ to $\omega ^{\prime
},$ if and only if for any $I\in \omega ,$ there exists some $I^{\prime }\in
\omega ^{\prime }$ such that $I\rightarrow I^{\prime }$ (in the first
graph), and for any $I^{\prime }\in \omega ^{\prime },$ there exists some $%
I\in \omega $ such that $I\rightarrow I^{\prime }$ (in the first graph).
Denote by $\{\Omega _{j}\}_{j}$\ the strongly connected components of $%
G^{\ast }.$ Note that $G_{\Xi }$ is a subgraph of $G^{\ast }$, and any
strongly connected component of $G_{\Xi }$ is also a strongly connected
component of $G^{\ast },$ hence $\{\rho (\Xi _{i})\}_{i}\subset \{\rho
(\Omega _{j})\}_{j}.$

\begin{theorem}
\label{T:1}Let $s=\frac{\log \rho (M)}{\log n}.$ Then we have the following
results. \newline
(1) $\dim _{H}U_{1}\geq s$ and $\mathcal{H}^{s}(U_{1})>0$. \newline
(2) Suppose the covering condition holds for $\prod_{i=1}^{l}K_{i}$ with
respect to $\mathbf{m}$, then
\begin{equation*}
\dim _{H}U_{1}=s.
\end{equation*}%
(3) Suppose the covering condition holds for $\prod_{i=1}^{l}K_{i}$ with
respect to $\mathbf{m}$ and $K_{i}$ satisfies the strong separation
condition holds (i.e., $1\notin \mathcal{A}_{i}-\mathcal{A}_{i}$) for all $%
i. $ Suppose $s>0,$ then
\begin{equation*}
\mathcal{H}^{s}(U_{1})=\infty
\end{equation*}%
if and only if there are two different strongly connected components $\Xi
_{i}$ and $\Xi _{j}$ in $G_{\Xi }$ such that $\rho (\Xi _{i})=\rho (\Xi
_{j})=\rho (M)$ and $\Xi _{i}\prec \Xi _{j}.$ In particular if $M$ is
irreducible then
\begin{equation*}
0<\mathcal{H}^{s}(U_{1})<\infty .
\end{equation*}
\end{theorem}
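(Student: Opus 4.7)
The plan is to realize (a subset of) $U_1$ as the attractor of a graph-directed self-similar iterated function system encoded by $G_\Xi$, and then combine Mauldin--Williams-type theorems with the covering condition. For part (1), to each infinite admissible walk $I_0 \to I_1 \to I_2 \to \cdots$ in $G_\Xi$ I would associate a single point $x \in [m_*, m^*]$ as follows. Since $I_0 \in \Xi$, there is a unique first-level cube $\mathbf{c}_{\mathbf{i}_1}$ with $I_0 \subset (\mathbf{m}, \mathbf{c}_{\mathbf{i}_1})$; rescaling by $n$ inside $\mathbf{c}_{\mathbf{i}_1}$, the vertex $I_1 \in \Xi$ determines a unique $\mathbf{i}_2$, and so on. The nested intersection of these cubes is a unique point $\mathbf{y} \in \prod K_i$ with $(\mathbf{m}, \mathbf{y}) = x$, and the uniqueness forced at every scale yields $x \in U_1$. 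The set of such $x$ is the attractor of a graph-directed self-similar system with uniform ratio $1/n$ and $0$-$1$ incidence matrix $M$, so Mauldin--Williams gives $\dim_H \geq s = \log\rho(M)/\log n$ and $\mathcal{H}^s > 0$.

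For the upper bound in part (2), I would show that the covering condition forces $U_1$ to lie in a countable union of similar copies of this attractor. Suppose $x \in U_1$ with unique preimage $\mathbf{y}$. I claim there exists a finite level $k_0$ beyond which the integer interval containing $x$ at each scale $n^{-k}$ (viewed inside the relevant $k$-th level ancestor cube) belongs to $\Xi$. If this failed on infinitely many scales, then at each such scale the integer interval containing $x$ would be covered by at least two distinct small cubes, and the covering condition --- which guarantees that $(\mathbf{m}, \cdot) = x$ is solvable on every such cube --- would let me build, via a diagonal/compactness argument, a second preimage of $x$, contradicting $|S_x|=1$. Summing over the countably many choices of $k_0$ and of the initial codes $\mathbf{i}_1, \ldots, \mathbf{i}_{k_0}$ then gives $\dim_H U_1 \leq s$.

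For part (3), the strong separation condition $1 \notin \mathcal{A}_i - \mathcal{A}_i$ makes distinct first-level cubes $\mathbf{c}_\mathbf{i}$ pairwise disjoint, and by induction the same holds at every level. This provides the open set condition for the graph-directed IFS, with the interiors of integer intervals as feasible open sets. I would then invoke the standard finite/infinite measure dichotomy for OSC graph-directed self-similar systems: the attractor has finite $\mathcal{H}^s$-measure iff no two strongly connected components $\Xi_i \prec \Xi_j$ share the maximal spectral radius $\rho(M)$. The infinite-measure direction is produced by iterating between the two maximal SCCs along a connecting path and estimating the resulting cylinder growth; the finite-measure direction is obtained by constructing a mass distribution on the attractor using the Perron eigenvectors on each maximal SCC. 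When $M$ is irreducible, $G_\Xi$ is a single SCC, so the dichotomy collapses to $0 < \mathcal{H}^s < \infty$. Finally, the countable decomposition from part (2) transfers the dichotomy from the attractor to $U_1$.

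The main obstacle is the careful bookkeeping in parts (2) and (3). For part (2), constructing the required second preimage when the integer intervals around $x$ never enter $\Xi$ is subtle: the covering condition merely asserts surjectivity, so one must propagate multiplicity through infinitely many levels via a genuinely limiting argument rather than at any single scale. For part (3), the sharp dichotomy in the graph-directed setting is delicate --- in particular one has to ensure that the scaled copies picked up in the decomposition of $U_1$ do not conspire to inflate a finite-measure situation to infinite (or vice versa), which boils down to the fact that the contribution from the countable "pre-periodic" part is summable precisely when the maximal SCCs form an antichain.
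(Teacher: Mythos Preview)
Your plan for part~(1) is essentially the paper's: embed the symbolic attractor of $G_\Xi$ (denoted $\Lambda$ in the paper) into $U_1$ and read off the lower bound and positive $\mathcal{H}^s$-measure from Mauldin--Williams.

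For part~(2) your strategy is in the right direction but more complicated than needed, and the ``diagonal/compactness'' step is where it would get stuck. The difficulty is that without strong separation two first-level cubes covering the same integer interval can share a face, so the two ``preimages'' you try to grow may collapse to the same boundary point; ruling this out along an infinite sequence of scales is exactly the delicate part you flag. The paper avoids this entirely with a monotonicity trick: let $N_k$ be the number of rank-$k$ basic cubes containing the unique solution $\mathbf{y}$. One checks $N_{k+1}\ge N_k$ and $N_k\le 2^l$, so $N_k$ stabilizes at some $k_0$. After stabilization, each cube containing $\mathbf{y}$ has exactly one child containing $\mathbf{y}$; combined with the covering condition (which forces every cube whose projection covers the relevant integer interval to contain $\mathbf{y}$), this shows the rescaled integer interval lies in $\Xi$ for all $k\ge k_0$. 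Thus $(f_{\mathbf{i}_{k_0}}\circ\cdots\circ f_{\mathbf{i}_1})(x)\in\Lambda$, and the countable union you describe follows with no limiting argument at all.

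Part~(3) has a genuine gap. You propose to transfer the finite/infinite $\mathcal{H}^s$-dichotomy from the graph-directed attractor $\Lambda$ to $U_1$ via the countable decomposition of part~(2), but a countable union of scaled copies of a set of finite $\mathcal{H}^s$-measure can easily have infinite $\mathcal{H}^s$-measure; your claim that the pre-periodic contributions are ``summable precisely when the maximal SCCs form an antichain'' is not correct in general (the number of compositions $f_{\mathbf{i}_k}\circ\cdots\circ f_{\mathbf{i}_1}$ at depth $k$ grows like $\prod_i|\mathcal{A}_i|^k$, which typically dominates $n^{ks}=\rho(M)^k$). The point you are missing is that strong separation does much more than provide the OSC: it makes basic cubes of each rank pairwise disjoint, so the unique $\mathbf{y}\in S_x$ lies in a \emph{unique} cube at every rank, i.e.\ $N_k\equiv 1$ and $k_0=0$. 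Hence under strong separation one has $U_1\,\triangle\,\Lambda\subset\{q_1/n^{q_2}:q_1,q_2\in\mathbb{Z}\}$, a countable set, so $\mathcal{H}^s(U_1)=\mathcal{H}^s(\Lambda)$ exactly and the Mauldin--Williams dichotomy on $\Lambda$ transfers verbatim. No summability argument is needed.
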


\begin{figure}[tbph]
\centering\includegraphics[width=0.65\textwidth]{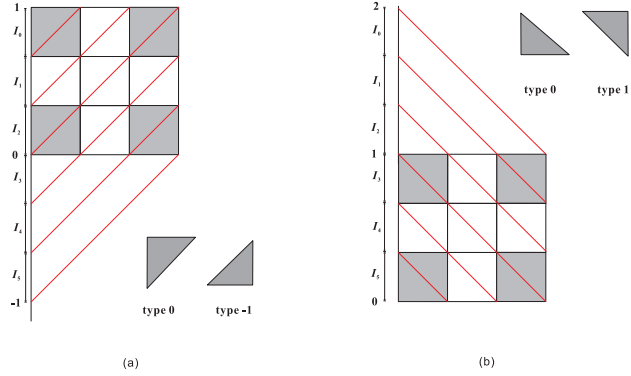} \vspace{-0.3cm}
\caption{Example 1}
\end{figure}

\begin{example}
As in part (a) of Fig. 1, let us consider the equation $x=-b_{1}+b_{2}$ with
$b_{i}\in C,$ where $C$ is the middle-third Cantor set and $(m_{1},m_{2})=(-1,1)$
with $m_{\ast }=-1$ and $m^{\ast }=1.$ Now, the covering condition and the
strong separation condition hold. Using the lines $\bigcup\nolimits_{d\in
3^{-1}\{-3,\cdots ,3\}}\{(x,y):y-x=d\},$ we obtain six integer intervals $%
I_{0},\cdots ,I_{5}$ and working intervals $[0,1]$ and $[-1,0].$ Four small
squares $\mathbf{c}_{(0,0)},\mathbf{c}_{(0,2)},\mathbf{c}_{(2,0)}$ and $%
\mathbf{c}_{(2,2)}$ are shown in part (a) of Fig. 1. We have $\Xi
=\{I_{0},I_{1},I_{4},I_{5}\}$ and the transition matrix
\begin{equation*}
M=\left(
\begin{array}{cccc}
1 & 1 & 0 & 0 \\
0 & 0 & 1 & 1 \\
1 & 1 & 0 & 0 \\
0 & 0 & 1 & 1%
\end{array}%
\right) \ \text{with }\rho (M)=2,
\end{equation*}%
then $\dim _{H}U_{1}=\frac{\log 2}{\log 3}=s$ and $0<\mathcal{H}%
^{s}(U_{1})<\infty $ since $M$ is irreducible. In this example, $\dim
_{H}U_{1}=\dim _{H}C=\frac{\log 2}{\log 3}.$

For part (b) of Fig. 1, let us consider the equation $x=b_{1}+b_{2}$ with $%
b_{i}\in C,$ where $C$ is the middle-third Cantor set and $(m_{1},m_{2})=(1,1)$
with $m_{\ast }=0$ and $m^{\ast }=2.$ We also have $\Xi
=\{I_{0},I_{1},I_{4},I_{5}\}$ and the same transition matrix $M$\ with $\rho
(M)=2.$ Hence in this case $\dim _{H}U_{1}=\dim _{H}C=\frac{\log 2}{\log 3}.$

In Examples 3 and 4 of Section 4, we have $\dim _{H}U_{1}<\min_{i}(\dim
_{H}K_{i})$ or $\dim _{H}U_{1}>\max_{i}(\dim _{H}K_{i}).$
\end{example}

Given $j\in \{0,\cdots ,(n-1)\},$ a $||\mathbf{m}||_{1}\times ||\mathbf{m}%
||_{1}$ matrix $T_{j}=(b_{uv})_{m_{\ast }\leq u,v\leq m^{\ast }-1}$ is
defined by
\begin{equation*}
b_{uv}=\text{\textbf{Card}}\{\mathbf{i}\in \prod\nolimits_{i=1}^{l}\mathcal{A%
}_{i}:(nu+j)-(\mathbf{m},\mathbf{i})=v\}.
\end{equation*}%
In fact, $b_{uv}>0$ if and only if $I_{(u,j)}\rightarrowtail J_{v},$ where $%
I_{(u,j)}=n^{-1}[nu+j,nu+j+1]$ and $J_{v}=[v,v+1].$ We also note that $%
b_{uv} $ is the number of small cubes $\mathbf{c}_{\mathbf{i}}$ such that $%
I_{(u,j)} $ has the relative position $v$ according to the projection
interval $(\mathbf{m},\mathbf{i})$\ of the small cube $\mathbf{c}_{\mathbf{i}%
}.$ Please see Examples 5 and 6 in Section 4 for this definition.

In fact, under the covering condition, by the method of \cite{LXZ}, we can
obtain that for Lebesgue almost all $x\in \lbrack m_{\ast },m^{\ast }],$
\begin{equation*}
\dim _{H}S_{x}=\dim _{B}S_{x}=\frac{\xi }{\log n},
\end{equation*}%
where $\xi $ is the Lyapunov exponent for the symmetric independent random
product of $T_{0},\cdots ,T_{n-1},$ i.e.,
\begin{equation*}
\xi =\lim_{k\rightarrow \infty }\frac{\log ||T_{x_{1}}T_{x_{2}}\cdots
T_{x_{k}}||}{\log k}
\end{equation*}%
with $x_{k}$ i.i.d. random variables assuming the values $\{0,1,\cdots
,(n-1)\}$ with equal probabilities. When the direction is fixed, some
general result on sections of self-similar sets can be found in \cite{WX}.

A mapping $\psi :\bigcup\nolimits_{I\in \Xi }$int($I)\rightarrow (m_{\ast
},m^{\ast })$ is defined by
\begin{equation*}
\psi (x)=n(x-a_{I})+t(I)\text{ for all }x\in \text{int(}%
I)=(a_{I},a_{I}+n^{-1}),
\end{equation*}%
where $t(I)$ is the type of $I.$ Then $\psi |_{\text{int}(I)}$ is a linear
surjection from $I$ to int($J_{t})=(t,t+1)$ with factor $n.$ Notice that if $%
x_{1},x_{2}\in \bigcup\nolimits_{I\in \Xi }$int($I)$ and $x_{1}=x_{2}+k$
with $k\in \mathbb{Z}$ and $x_{i}\in $int($I_{i}$) for $i=1,2,$ then
\begin{equation}
\psi (x_{1})-\psi (x_{2})=n(k-a_{I_{1}}+a_{I_{2}})+(t(I_{1})-t(I_{2}))\in
\mathbb{Z}\text{.}  \label{tex}
\end{equation}%
That means the integer intervals containing $\psi (x_{1})$ and $\psi (x_{2})$
are congruent (modulo $n)$ if $x_{1},x_{2}$ lie in the interiors of the
corresponding integer intervals respectively. In fact, the third graph
defined above is based on this observation. Given $z\in \lbrack
0,1)\backslash \{\frac{q_{1}}{n^{q_{2}}}\}_{q_{1},q_{2}\in \mathbb{Z}}$ and
a subset $\mathcal{D}$ of $\mathbb{Z}\cap \lbrack m_{\ast },m^{\ast }-1]$,
we say that the vector $(z+p)_{p\in \mathcal{D}}$ (with index lying in $%
\mathbb{Z}\cap \lbrack m_{\ast },m^{\ast }-1])$ has an infinite coding $%
\omega _{0}\omega _{1}\cdots \omega _{k}\cdots $ in $G^{\ast },$ if $\psi
^{i}(z+p)$ belongs to the interior of some integer interval of $\Xi $ for
each $i\geq 0$ and $p\in \mathcal{D}$, and $\omega _{i}$ is the smallest
congruent subset of $\Xi $ containing $\bigcup\nolimits_{p\in \mathcal{D}%
}\{\psi ^{i}(z+p)\}$ for each $i\geq 0$ such that $\omega _{0}\omega
_{1}\cdots \omega _{k}\cdots $ is admissible in $G^{\ast }$. Let $e_{i}$ be
the $i$-th one of the natural basis on $\mathbb{R}^{||\mathbf{m}||_{1}}$ for
$m_{\ast }\leq i\leq m^{\ast }-1.$

\begin{theorem}
\label{T:2}Suppose the covering condition holds and $K_{i}$ satisfies the
strong separation condition for each $i$. Then
\begin{equation*}
U_{r}\backslash \{\frac{q_{1}}{n^{q_{2}}}\}_{q_{1},q_{2}\in \mathbb{Z}}\neq
\emptyset ,
\end{equation*}%
if and only if there exist $i,j_{1},\cdots ,j_{k}$ such that
\begin{equation*}
r=||e_{i}T_{j_{1}}\cdots T_{j_{k}}||_{1}
\end{equation*}%
with $e_{i}T_{j_{1}}\cdots T_{j_{k}}=(\beta _{m_{\ast }},\cdots ,\beta
_{m^{\ast }-1})$ and $\mathcal{D}=\{p:\beta _{p}\neq 0\}$ satisfying
\begin{equation}
\{z\in \lbrack 0,1)\backslash \{\frac{q_{1}}{n^{q_{2}}}\}_{q_{1},q_{2}\in
\mathbb{Z}}:(z+p)_{p\in \mathcal{D}}\text{ has an infinite coding in }%
G^{\ast }\}\neq \emptyset .  \label{q}
\end{equation}%
The condition (\ref{q}) implies that the set $\{\bar{\omega}=\omega
_{i_{0}}\omega _{i_{1}}\cdots \omega _{i_{k}}\cdots :$ the infinite sequence
$\bar{\omega}\ $is admissible in $G^{\ast }$ with $\mathcal{D}(\omega
_{i_{0}})=\mathcal{D\}}$ is non-empty. If the latter set is uncountable,
then (\ref{q}) follows. In particular, if $\mathcal{D}(\omega _{i_{0}})=%
\mathcal{D}$ and $\omega _{i_{0}}\in \Omega _{i}$ such that $\rho (\Omega
_{j})>0$ with $\Omega _{i}\prec \Omega _{j}$ for some $\Omega _{j},$ then (%
\ref{q}) follows.
\end{theorem}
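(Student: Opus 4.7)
The central tool is a recursive formula for $|S_x|$ in terms of the matrices $T_j$. For non-$n$-adic $x \in (m_*, m^*)$, let $u' = \lfloor x \rfloor$ denote the working interval containing $x$ and expand $\{x\} = \sum_{\ell \geq 1} a_\ell n^{-\ell}$ in base $n$. Under the strong separation condition each $\mathbf{y} \in S_x$ corresponds bijectively to a sequence $(\mathbf{i}_\ell)_\ell$ with $x = \sum_\ell (\mathbf{m}, \mathbf{i}_\ell) n^{-\ell}$; grouping these by the evolving type $v_\ell := n v_{\ell-1} + a_\ell - (\mathbf{m}, \mathbf{i}_\ell)$ (with $v_0 = u'$) produces the identity
\begin{equation*}
|S_x| = \sum_{v \in [m_*, m^*-1]} (e_{u'} T_{a_1} \cdots T_{a_k})_v \, |S_{v + r_k}|, \qquad r_k := \sum_{\ell > k} a_\ell n^{-(\ell - k)}.
\end{equation*}
Write $\beta^{(k)} := e_{u'} T_{a_1} \cdots T_{a_k}$, so that $\beta_v^{(k)}$ counts the distinct length-$k$ prefixes $(\mathbf{i}_1, \ldots, \mathbf{i}_k)$ consistent with $x$ and ending with type $v$.

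For the forward direction, assume $x \in U_r$ is non-$n$-adic. Since all integer-interval and projected-cube endpoints lie in $n^{-1}\mathbb{Z}$, the covering condition implies that every integer interval in $[m_*, m^*]$ is wholly contained in at least one projected cube, so every row sum of every $T_j$ is at least one; hence $\|\beta^{(k)}\|_1$ is non-decreasing in $k$ and bounded above by $|S_x| = r$. The $r$ distinct infinite coding sequences in $S_x$ must be separated at some finite depth, so $\|\beta^{(k)}\|_1 = r$ for all sufficiently large $k$, and then the identity $r = \sum_v \beta_v^{(k)} |S_{v+r_k}|$ forces $|S_{v+r_k}| = 1$ whenever $v \in \mathcal{D}_k := \{v : \beta_v^{(k)} > 0\}$. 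Take $i = u'$, $j_\ell = a_\ell$, $\mathcal{D} = \mathcal{D}_k$, $z = r_k$; then $z$ is non-$n$-adic, and the condition $|S_{v+z}| = 1$ for each $v \in \mathcal{D}$ is equivalent, by the unique-representation criterion established in the proof of Theorem \ref{T:1}, to $\psi^j(v+z)$ lying in the interior of some element of $\Xi$ for every $j \geq 0$. Observation (\ref{tex}) then places the integer intervals containing $\{\psi^j(v+z)\}_{v \in \mathcal{D}}$ in a common congruent subset of $\Xi$ for each $j$; taking $\omega_j$ to be the minimal such subset makes $\omega_0 \omega_1 \cdots$ an admissible $G^*$-coding, verifying (\ref{q}).

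For the reverse direction, given $i, j_1, \ldots, j_k$ and a non-$n$-adic witness $z$, set $x = i + \sum_{\ell=1}^{k} j_\ell n^{-\ell} + z n^{-k} \in [m_*, m^*]$, which is non-$n$-adic. The recursion gives $|S_x| = \sum_v \beta_v^{(k)} |S_{v+z}|$. For each $v \in \mathcal{D}$, the infinite $G^*$-coding of $(z+p)_{p \in \mathcal{D}}$ places the entire forward $\psi$-orbit of $v+z$ in the interior of elements of $\Xi$, which under strong separation is precisely the condition $|S_{v+z}| = 1$. Therefore $|S_x| = \|\beta^{(k)}\|_1 = r$, as required.

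For the auxiliary claims, any $z$ witnessing (\ref{q}) satisfies $z + p \in \text{int}(J_p)$ for each $p \in \mathcal{D}$, so $\mathcal{D}(\omega_0) = \mathcal{D}$ and the set of admissible $\bar{\omega}$'s of the stated form is non-empty. Each admissible $\bar{\omega}$ with $\mathcal{D}(\omega_0) = \mathcal{D}$ determines a unique $z \in [0, 1)$ by intersecting the nested $\psi^{-j}$-refinements of the integer intervals in $\omega_j$, and distinct $\bar{\omega}$'s yield distinct $z$'s; since the $n$-adic rationals in $[0, 1)$ are countable, any uncountable family of admissible $\bar{\omega}$'s produces a non-$n$-adic witness, so (\ref{q}) follows. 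Finally, $\omega_{i_0} \in \Omega_i \prec \Omega_j$ with $\rho(\Omega_j) > 0$ means $\Omega_j$ contains a cycle and supports uncountably many admissible infinite tails; concatenated with any finite path from $\omega_{i_0}$ into $\Omega_j$, these yield uncountably many admissible $\bar{\omega}$'s starting at $\omega_{i_0}$, and the previous step applies. The main technical obstacles are (i) the stabilization $\|\beta^{(k)}\|_1 = r$, which rests on interpreting $\|\beta^{(k)}\|_1$ faithfully as a prefix count, where strong separation is essential to prevent collapse of distinct symbolic prefixes into the same $\mathbf{y}$, and (ii) the passage between the combinatorial $G^*$-coding condition and the analytic statement $|S_{v+z}| = 1$, which uses the unique-representation characterization behind Theorem \ref{T:1}.
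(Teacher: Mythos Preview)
Your proof of the main equivalence is correct and follows essentially the same route as the paper: both arguments hinge on the counting identity (your recursive formula is the content of the paper's Lemma~\ref{L:c}), the stabilization of the cube count $\|\beta^{(k)}\|_1$ at $r$ via the covering and strong separation conditions, and the identification of $|S_{v+z}|=1$ with the infinite $G_\Xi$-coding property coming from the proof of Theorem~\ref{T:1}. The construction of $x$ in the reverse direction is likewise the same as the paper's.

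You go further than the paper in treating the three auxiliary claims, which the paper states in the theorem but does not argue in its proof. Your reasoning that an uncountable family of admissible $\bar\omega$ with $\mathcal{D}(\omega_{i_0})=\mathcal{D}$ forces (\ref{q}) is correct in substance; the key point you leave implicit is that such a $\bar\omega$ is determined by $\mathcal{D}$ together with its sequence of residues $h_j$ (the common residue modulo $n$ of the elements of $\omega_j$), and this residue sequence is precisely the base-$n$ expansion of the associated $z$, so the map $\bar\omega\mapsto z$ is injective. One genuine gap remains in the final claim: you assert that $\rho(\Omega_j)>0$ produces uncountably many admissible infinite tails in $\Omega_j$, but for a $0$--$1$ adjacency matrix on a strongly connected component the condition $\rho>0$ only forces $\rho\ge 1$, and $\rho=1$ (a single directed cycle) yields essentially one infinite tail. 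In that borderline case the uncountability route does not apply, and a separate check that the resulting eventually periodic $z$ avoids $\{q_1/n^{q_2}\}$ is needed.
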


\begin{theorem}
\label{T:3}Suppose the covering condition holds$.$ Let $U_{r}\neq \emptyset $
with $r\geq 2$ and
\begin{equation*}
d_{r}=\dim _{H}U_{r}.
\end{equation*}%
(1) Then we have
\begin{equation*}
d_{r}\leq \frac{\log \rho (M)}{\log n}.
\end{equation*}%
(2) Moreover, we assume that $K_{i}$ satisfies the strong separation
condition for each $i$, then either $U_{r}$ is countable, or
\begin{equation*}
d_{r}\in \left( \bigcup\nolimits_{i}\max \left\{ \frac{\log \rho (\Omega
_{j})}{\log n}:\Omega _{i}\prec \Omega _{j}\right\} \right) \setminus \{0\}%
\text{ and }\mathcal{H}^{d_{r}}(U_{r})>0.
\end{equation*}%
(3) Suppose $K_{i}$ satisfies the strong separation condition for all $i$
and $[m_{\ast },m^{\ast }]$ is dominated by $\Xi $ for $d_{r}$, then $%
\mathcal{H}^{d_{r}}(U_{r})=\infty .$ In particular, if $M$ is irreducible
and the elements in $\Xi $ can reach every working interval, then
\begin{equation*}
\mathcal{H}^{d_{r}}(U_{r})=\infty .
\end{equation*}
\end{theorem}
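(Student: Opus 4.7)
The plan is to prove the three claims of Theorem~\ref{T:3} in order, relying throughout on the $\psi$-dynamics and the admissible-coding framework developed for Theorem~\ref{T:2}. In all three parts, the countable set $U_r\cap\{q_1/n^{q_2}\}_{q_1,q_2\in\mathbb{Z}}$ is negligible for Hausdorff dimension, so I may restrict attention to $U_r\setminus\{q_1/n^{q_2}\}$.

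For Part (1), I would establish the upper bound $d_r\le s$ by a cylinder covering argument. For each $x\in U_r\setminus\{q_1/n^{q_2}\}$, strong separation (and the covering condition) gives every preimage $\mathbf{y}\in S_x$ a unique symbolic address in $\prod\mathcal{A}_i^{\mathbb{N}}$, and at every level $k$ the point $x$ lies in each of the $r$ projection intervals $(\mathbf{m},\mathbf{c}_{\mathbf{i}_1\cdots\mathbf{i}_k})$. Iterating $\psi$ from $x$ produces an admissible orbit in the graph $G_\Xi$, which shows that $U_r\setminus\{q_1/n^{q_2}\}$ admits a cover at scale $n^{-k}$ of cardinality at most $C\rho(M)^k$. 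Letting $k\to\infty$ gives $d_r\le\log\rho(M)/\log n=s$.

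For Part (2), I would apply Theorem~\ref{T:2} to classify each $x\in U_r\setminus\{q_1/n^{q_2}\}$ by the admissible sequence $\omega_{i_0}\omega_{i_1}\cdots$ it induces in $G^{\ast}$, with $\mathcal{D}(\omega_{i_0})=\{p:\beta_p\neq 0\}$ matched to a product $e_iT_{j_1}\cdots T_{j_k}$ of norm $r$. In a finite graph, every infinite admissible path eventually enters some strongly connected component $\Omega_j$ with $\Omega_{i_0}\prec\Omega_j$. I would partition $U_r$ according to this eventual recurrence class and, using the strong separation condition to convert cube counting into clean subshift dynamics, realize each partition class as a countable union of graph-directed self-similar sets of dimension $\log\rho(\Omega_j)/\log n$. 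Maximizing over reachable components yields the explicit value of $d_r$ and, by the usual mass distribution estimate on the maximizing class, the positivity $\mathcal{H}^{d_r}(U_r)>0$; the alternative (no positive-dimension class) forces $U_r$ to be countable.

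For Part (3), the domination hypothesis ensures that every working interval $J_t$ can be reached in $G_\Xi$ from some $\Xi_i$ with $\log\rho(M_i)/\log n\ge d_r$. Starting from a set of positive $\mathcal{H}^{d_r}$-measure inside $U_r$ provided by Part (2), I would graft countably many disjoint bi-Lipschitz copies of this set into $U_r$ by exploiting the infinitely many admissible routes supplied by the domination, yielding $\mathcal{H}^{d_r}(U_r)=\infty$. The stated special case (where $M$ is irreducible and $\Xi$ reaches every working interval) follows, since then $\Xi$ itself realizes the dominating component. The main obstacle will be Part~(2): matching the admissible-path classification in $G^{\ast}$ to the preimage count $r$ via the $T_j$-matrices and simultaneously exhibiting both the sharp upper bound and positivity of $\mathcal{H}^{d_r}$. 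In particular, handling vertices $\omega_{i_k}$ of size greater than one without destroying the branching rate $\rho(\Omega_j)$ will require careful bookkeeping of how the strong separation condition lifts to iterates of $\psi$.
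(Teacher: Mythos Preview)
Your outline for Parts (2) and (3) matches the paper's strategy closely enough. The genuine gap is in Part (1).

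First, Part (1) assumes only the covering condition, not the strong separation condition, yet you invoke the latter to obtain unique symbolic addresses for the preimages. The paper's argument avoids this: it bounds the number $N_k$ of rank-$k$ basic cubes meeting $S_x$ by $2^l r$ (each of the $r$ solutions touches at most $2^l$ such cubes), and that crude bound is all that is needed.

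Second, and more seriously, the claim ``iterating $\psi$ from $x$ produces an admissible orbit in $G_\Xi$'' is false for $x\in U_r$ with $r\ge 2$. The map $\psi$ is defined only on $\bigcup_{I\in\Xi}\mathrm{int}(I)$; if the $\psi$-orbit of $x$ remained in $\Xi$ forever then $x\in\Lambda$ and hence (up to the countable exceptional set) $x\in U_1$, contradicting $r\ge 2$. The orbit must leave $\Xi$ precisely when branching occurs, $\psi$ becomes undefined there, and the promised uniform cover of cardinality $C\rho(M)^k$ does not follow. That estimate is a box-counting statement, and the paper never asserts it.

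What the paper does instead is use the \emph{multi}-dynamical system $\{f_{\mathbf i}\}$ rather than the single-valued $\psi$. Following any one branch, the count $N_k$ is nondecreasing (covering condition) and bounded by $2^l r$, hence stabilises at some $k_0=k_0(x)$; from that point on this branch stays in $\Xi$, i.e.\ $(f_{\mathbf i_{k_0}}\circ\cdots\circ f_{\mathbf i_1})(x)\in\Lambda$. Therefore $U_r$ lies in the countable union $\bigcup_{\mathbf i_1\cdots\mathbf i_k}(f_{\mathbf i_k}\circ\cdots\circ f_{\mathbf i_1})^{-1}(\Lambda)$, and countable stability of Hausdorff dimension gives $d_r\le\dim_H\Lambda=s$. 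Because $k_0$ depends on $x$, no single-scale covering argument can replace this step.
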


\begin{example}
Let us consider $C-C$ again. We note that for $\omega =\{I_{0},I_{4}\}$ we
only find the edges from $\omega $ to $\{I_{0}\}$ and $\{I_{1}\},$ and the
edges from $\{I_{1},I_{5}\}$ to $\{I_{0}\}$ and $\{I_{1}\}.$ The effective
part of the matrix is the submatrix with respect to $G_{\Xi }.$ Then the
non-empty set $U_{r}$ is countable or has dimension $\log 2/\log 3.$ We also
check that $M$ in Example 1 is irreducible and the elements in $\Xi $ can
reach every working interval. By Theorem \ref{T:3}, $U_{r}$ is countable or $%
\dim _{H}U_{r}=\log 2/\log 3$ with infinite Hausdorff measure.

Now, we will find $r$ such that $U_{r}$ is non-empty. Ignoring a countable
set $\{\frac{q}{3^{p}}\}_{p,q\in \mathbb{Z}},$ we can take the infinite
coding with digit $\{0,1,2,3,4,5\}$\ to represent the number in $[-1,1]$
uniquely$,$ where digit $j$ represents $I_{j}.$ Now, the coding is not free,
we have the following rules: \newline
(1) If the current digit is $0,2$ or $4,$ then the next digit shall be taken
in $\{0,1,2\};$ \newline
(2) If the current digit is $1,3$ or $5,$ then the next digit shall be taken
in $\{3,4,5\}.$ \newline
Let
\begin{equation*}
\eta (i)=\left\{
\begin{array}{ll}
1 & \text{if }i=0,1,4\text{ or }5\text{,} \\
2 & \text{otherwise.}%
\end{array}%
\right.
\end{equation*}%
Then
\begin{equation*}
\mathbf{Card}\{(b_{1},b_{2})\in C\times
C:x=-b_{1}+b_{2}\}=\prod\nolimits_{k}\eta (i_{k}),
\end{equation*}%
if $x$ has coding $i_{0}i_{1}i_{2}\cdots i_{k}\cdots ,$ when we ignore a
countable set $\{\frac{q_{1}}{3^{q_{2}}}\}_{q_{1},q_{2}\in \mathbb{Z}}.$%
\begin{figure}[tbph]
\centering\includegraphics[width=0.7\textwidth]{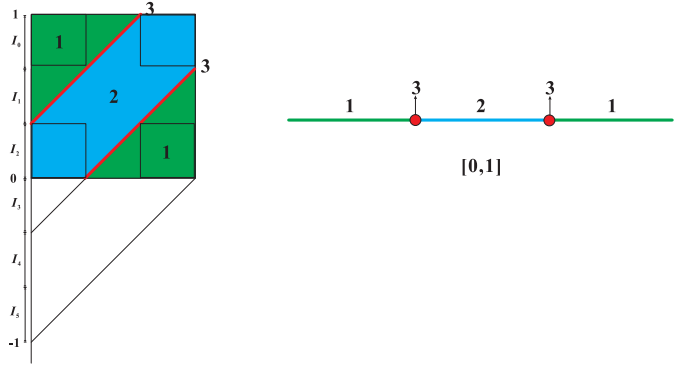} \vspace{-0.3cm}
\caption{Example 2}
\end{figure}

Let $P:\{0,1,2,3,4,5\}\rightarrow \{0,1,2\}$ be a projection defined by $%
P(2j)=P(2j+1)=j,$ we simplify this above model to the unit interval with $3$%
-adic expansion and take
\begin{equation*}
\bar{\eta}(j)=\left\{
\begin{array}{ll}
1 & \text{if }j=0,2\text{,} \\
2 & \text{if }j=1.%
\end{array}%
\right.
\end{equation*}%
Then the cardinality is $\prod\nolimits_{k}\bar{\eta}(j_{k})$ for the $3$%
-adic expansion $0.j_{0}j_{1}\cdots j_{k}\cdots .$ Using this projection, $%
U_{1}$ becomes the Cantor set, and $\{x:$\emph{\textbf{Card}}$(S_{x})=2^{u}\}
$ becomes the set
\begin{equation*}
\{0.j_{0}j_{1}\cdots j_{k}\cdots :\text{digit }1\text{ appears }u\text{
times in the expansion}\}.
\end{equation*}%
Let $s=\log 2/\log 3.$ Then one can check that%
\begin{equation}
\dim _{H}\{x:\mathbf{Card}(S_{x})=2^{u}\}=s,\text{ }\mathcal{H}^{s}\{x:%
\mathbf{Card}(S_{x})=2^{u}\}=\infty
\end{equation}%
for any $u\in \mathbb{N}^{+}$. Meanwhile, note red lines of $[0,1]^{2}$ and
red points of $[0,1]$ in Fig. 2, we have%
\begin{equation*}
U_{3\cdot 2^{u}}\subset \{\frac{q_{1}}{3^{q_{2}}}\}_{q_{1},q_{2}\in \mathbb{Z%
}}\text{ and \emph{\textbf{Card}}}(U_{3\cdot 2^{u}})=\aleph _{0},
\end{equation*}%
hence
\begin{equation}
\dim _{H}(U_{3\cdot 2^{u}})=0\text{ with }\mathcal{H}^{0}(U_{3\cdot
2^{u}})=\infty .
\end{equation}
\end{example}

\medskip

The paper is organized as follows. In Section 2, we provide the
preliminaries, including graph-directed construction \cite{Mauldin}, multi
dynamical systems for sections of self-similar sets \cite{Xi}\ and counting
formula of sections \cite{LXZ}. Section 3 is devoted to Theorem \ref{T:1} on
$U_{1}$. In Section 3, we will prove Theorems \ref{T:2} and \ref{T:3} on $%
U_{r}$ with $r\geq 2.$ We also give some examples in Section 4.

\bigskip

\section{Preliminaries}

Note that
\begin{equation}
\prod\nolimits_{i=1}^{l}K_{i}=\bigcup\nolimits_{\mathbf{i}\in \prod_{i=1}^{l}%
\mathcal{A}_{i}}\frac{\prod_{i=1}^{l}K_{i}+\mathbf{i}}{n}.  \label{aa}
\end{equation}%
For notational convenience, we write $S_{\mathbf{i}}(x)=\frac{x+\mathbf{i}}{n%
}$ and $S_{\mathbf{i}_{1}\cdots \mathbf{i}_{k}}=S_{\mathbf{i}_{1}}\circ
\cdots \circ S_{\mathbf{i}_{k}}.$ The cube $S_{\mathbf{i}_{1}\cdots \mathbf{i%
}_{k}}([0,1]^{l})$ with sidelength $n^{-k}\ $is said to be a basic cube of
rank $k.$ Denote the hyperplane
\begin{equation*}
H_{x}=\{\mathbf{y}\in \mathbb{R}^{l}:(\mathbf{m},\mathbf{y})=x\}.
\end{equation*}

\begin{lemma}
Suppose the covering condition holds for $\prod\nolimits_{i=1}^{l}K_{i}$
with respect to $\mathbf{m}.$ Then $H_{x}$ intersects $S_{\mathbf{i}%
_{1}\cdots \mathbf{i}_{k}}([0,1]^{l})$ if and only it intersects $S_{\mathbf{%
i}_{1}\cdots \mathbf{i}_{k}}(\prod_{i=1}^{l}K_{i}).$
\end{lemma}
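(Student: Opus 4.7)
The plan is to reduce the general rank-$k$ statement to the base case $k=0$ by pulling the hyperplane $H_{x}$ back through the similitude $S_{\mathbf{i}_{1}\cdots \mathbf{i}_{k}}$, and then to settle the base case by recognizing two competing Hutchinson-operator fixed points. The $\Leftarrow$ direction is trivial since $\prod_{i=1}^{l}K_{i}\subset \lbrack 0,1]^{l}$, so the entire content is in the forward direction.

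For the reduction, I would use that $S_{\mathbf{i}_{1}\cdots \mathbf{i}_{k}}$ has the form $\mathbf{z}\mapsto n^{-k}\mathbf{z}+\mathbf{a}$ for an explicit translation vector $\mathbf{a}$ built from $\mathbf{i}_{1},\ldots ,\mathbf{i}_{k}$. Hence $S_{\mathbf{i}_{1}\cdots \mathbf{i}_{k}}^{-1}(H_{x})$ is again a hyperplane with normal direction $\mathbf{m}$, namely $H_{x^{\prime }}$ with $x^{\prime }=n^{k}(x-(\mathbf{m},\mathbf{a}))$. The assumption that $H_{x}$ meets the cube $S_{\mathbf{i}_{1}\cdots \mathbf{i}_{k}}([0,1]^{l})$ translates into $x^{\prime }\in (\mathbf{m},[0,1]^{l})=[m_{\ast },m^{\ast }]$, while $H_{x}$ meeting $S_{\mathbf{i}_{1}\cdots \mathbf{i}_{k}}(\prod_{i}K_{i})$ translates into $x^{\prime }\in (\mathbf{m},\prod_{i}K_{i})$. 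So the lemma reduces to the single identity $(\mathbf{m},\prod_{i=1}^{l}K_{i})=[m_{\ast },m^{\ast }]$.

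To prove this identity I would apply the functional $(\mathbf{m},\cdot )$ to both sides of the self-similar relation (\ref{aa}), obtaining
\begin{equation*}
\bigl(\mathbf{m},\prod\nolimits_{i=1}^{l}K_{i}\bigr)=\bigcup\nolimits_{\mathbf{i}\in \prod \mathcal{A}_{i}}n^{-1}\bigl(\bigl(\mathbf{m},\prod\nolimits_{i=1}^{l}K_{i}\bigr)+(\mathbf{m},\mathbf{i})\bigr).
\end{equation*}
On the other hand, unpacking $(\mathbf{m},\mathbf{c}_{\mathbf{i}})=n^{-1}((\mathbf{m},\mathbf{i})+[m_{\ast },m^{\ast }])$, the covering condition reads
\begin{equation*}
[m_{\ast },m^{\ast }]=\bigcup\nolimits_{\mathbf{i}\in \prod \mathcal{A}_{i}}n^{-1}\bigl([m_{\ast },m^{\ast }]+(\mathbf{m},\mathbf{i})\bigr).
\end{equation*}
Thus both $(\mathbf{m},\prod_{i}K_{i})$ and $[m_{\ast },m^{\ast }]$ are nonempty compact fixed points of the Hutchinson operator $F(X)=\bigcup_{\mathbf{i}}n^{-1}(X+(\mathbf{m},\mathbf{i}))$ on compact subsets of $\mathbb{R}$, and uniqueness of the attractor under $F$ (Banach's theorem on the Hausdorff hyperspace) forces them to coincide.

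I do not anticipate any genuine obstacle; the only care required is the affine bookkeeping in the pull-back step and the explicit unpacking of the covering condition into the form of a Hutchinson fixed-point equation. The crucial input is the contraction property of $F$ together with the fact that the covering condition \emph{forces} the whole interval $[m_{\ast },m^{\ast }]$ to be a fixed point, which would otherwise fail.
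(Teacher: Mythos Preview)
Your argument is correct. The paper actually states this lemma without proof, so there is nothing to compare against; your proposal fills in the details cleanly. The reduction via the similitude $S_{\mathbf{i}_{1}\cdots\mathbf{i}_{k}}$ to the identity $(\mathbf{m},\prod_{i}K_{i})=[m_{\ast},m^{\ast}]$ is exactly the right move, and recognizing both sides as the unique attractor of the IFS $\{x\mapsto n^{-1}(x+(\mathbf{m},\mathbf{i}))\}_{\mathbf{i}}$ is an efficient way to conclude. The only minor point worth making explicit is that $(\mathbf{m},\prod_{i}K_{i})$ is indeed nonempty and compact (as the continuous image of a nonempty compact product), so that the Hutchinson uniqueness theorem applies.
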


\subsection{Graph-directed Construction}

$\ $

Recall the graph-directed construction introduced by Mauldin and Williams
\cite{Mauldin}.

Given a directed graph $G=(V,E)$, suppose $\{H_{i}\}_{i}$ are its strongly
connected components and $\mathcal{N}=(b_{v,v^{\prime }})_{v,v^{\prime }\in
V}$ is the matrix respect to the graph, i.e., $b_{v,v^{\prime }}$ is number
of edges from $v$ to $v^{\prime }.$ Given a directed edge $e$, we equip a
linear mapping $g_{e}:\mathbb{R}\rightarrow \mathbb{R}$ with contracting
ratio $n^{-1}$ where $n^{-1}$ is also the contracting ratio in (\ref{aa}).
By the classical result of \cite{Mauldin}, there is a unique family of
compact subsets $\{K_{v}\}_{v\in V}\subset \mathbb{R}$ such that
\begin{equation*}
K_{v}=\bigcup\nolimits_{v^{\prime }\in V}\bigcup\nolimits_{e\in \mathcal{E}%
(v,v^{\prime })}g_{e}(K_{v^{\prime }}),
\end{equation*}%
where $\mathcal{E}(v,v^{\prime })$ denotes the set of directed edges from $v$
to $v^{\prime }.$ We say that the open set condition holds, if there are
non-empty open sets $\{U_{v}\}_{v\in V}$ of $\mathbb{R}$ such that
\begin{equation*}
\bigcup\nolimits_{v^{\prime }\in V}\bigcup\nolimits_{e\in \mathcal{E}%
(v,v^{\prime })}g_{e}(U_{v^{\prime }})\subset U_{v}
\end{equation*}%
and the left hand of the above formula is a disjoint union for each $v\in V.$
By the results in \cite{Mauldin}, we have the following

\begin{lemma}
\label{L:s}Suppose the open set condition holds. Then \newline
(1) $\dim _{H}(\bigcup\nolimits_{v\in V}K_{v})=\frac{\log \rho (\mathcal{N})%
}{\log n}=s;$ \newline
(2) Suppose $s>0,$ then $\mathcal{H}^{s}(\bigcup\nolimits_{v\in V}K_{v})>0;$
\newline
(3) Suppose $s>0,$ then $\mathcal{H}^{s}(\bigcup\nolimits_{v\in
V}K_{v})=\infty $ if and only if there are two different strongly connected
components $H$ and $H^{\prime }$ such that $\rho (H)=\rho (H^{\prime })=\rho
(\mathcal{N})$ and $H\prec H^{\prime }.$\newline
(4) Suppose $v\in H_{i},$ then
\begin{equation*}
\dim _{H}K_{v}=\max \left\{ \frac{\log \rho (H_{j})}{\log n}:H_{i}\prec
H_{j}\right\} .
\end{equation*}%
Moreover, if $d=\dim _{H}K_{v}>0,$ then $\mathcal{H}^{d}(K_{v})>0.$
\end{lemma}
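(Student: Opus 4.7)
The plan is to treat Lemma \ref{L:s} as a synthesis of Mauldin--Williams type statements \cite{Mauldin}, organised so that parts (1), (2), (4) are direct quotations of (or immediate reductions to) the classical graph-directed theory, while part (3) is extracted from a careful bookkeeping of which strongly connected components actually carry the $s$-dimensional Hausdorff measure. Throughout I will use the symbolic coding: infinite admissible edge paths $e_1 e_2 \cdots$ in $G$ starting at $v$ parametrise $K_v$ via $\bigcap_{k\ge 1} g_{e_1}\circ\cdots\circ g_{e_k}(U_{\text{target}})$, and each such composition has contraction ratio $n^{-k}$. Since all edge maps share the common ratio $n^{-1}$, every self-similar identity reduces to counting admissible paths, i.e.\ to powers of the adjacency matrix $\mathcal{N}$.

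For parts (1) and (2) I would invoke Mauldin--Williams directly: under the open set condition, the unique value $s$ with $\rho(n^{-s}\mathcal{N})=1$, equivalently $s=\log\rho(\mathcal{N})/\log n$, is the Hausdorff dimension of $\bigcup_v K_v$, and the $s$-measure is positive. The upper bound comes from the natural cover by rank-$k$ cylinders (there are $\|\mathcal{N}^k\|_\infty$ of them, each of diameter $\lesssim n^{-k}$, giving $\mathcal{H}^s\le \limsup \|\mathcal{N}^k\|_\infty\cdot n^{-ks}<\infty$ on each component whose reachable part is a single critical SCC). The lower bound is a standard mass distribution argument using the Perron--Frobenius eigenvector of $\mathcal{N}$ restricted to a strongly connected component of spectral radius $\rho(\mathcal{N})$; the open set condition ensures the resulting measure puts bounded mass on each ball of radius $n^{-k}$.

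For part (4), I would restrict the graph to $V_v:=\{v'\in V:\text{there is a directed path from }v\text{ to }v'\}$. Then $K_v\subset\bigcup_{v'\in V_v}K_{v'}$, and the adjacency matrix of the induced subgraph has block upper-triangular form with diagonal blocks indexed by those $H_j$ with $H_i\prec H_j$ (where $v\in H_i$). Hence its spectral radius equals $\max\{\rho(H_j):H_i\prec H_j\}$, and parts (1)--(2) applied to the subgraph yield the claimed dimension formula and positive measure.

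The main obstacle is part (3), the precise characterisation of $\mathcal{H}^s=\infty$. For the ``if'' direction, suppose $H\prec H'$ are distinct SCCs with $\rho(H)=\rho(H')=\rho(\mathcal{N})=:\rho$. Then for every fixed vertex $v\in H$, the set of admissible paths from $v$ that traverse arbitrarily many $H$-loops, then pass to $H'$ at an arbitrarily late stage, and then loop inside $H'$, produces a nested sequence of disjoint subattractors each similar to a copy of $K_{v'}$ ($v'\in H'$) sitting inside a distinct rank-$k$ cylinder of $K_v$; counting such cylinders yields the sum $\sum_{k} (\rho^k/n^{ks})\cdot \mathcal{H}^s(K_{v'})=\sum_k \mathcal{H}^s(K_{v'})=\infty$ because each term equals the same positive constant by the choice of $s$. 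For the ``only if'' direction, if all SCCs with $\rho(H_j)=\rho$ are pairwise $\prec$-incomparable, then each $K_v$ is a finite union of pieces of the form $g(K_{v'})$ with $v'$ in at most one critical SCC reachable from $v$; applying the upper bound from (1) to each such block gives $\mathcal{H}^s(K_v)<\infty$, and summing over the finite vertex set $V$ yields $\mathcal{H}^s(\bigcup_v K_v)<\infty$. The delicate point I expect to spend most effort on is verifying that in the ``if'' construction the concatenated copies are genuinely disjoint and uniformly separated, which is where the open set condition is used in an essential way.
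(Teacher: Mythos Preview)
Your proposal is sound, but you should know that the paper does not actually prove this lemma: it is stated immediately after the sentence ``By the results in \cite{Mauldin}, we have the following'' and no argument is given. So there is no proof in the paper to compare against; the authors treat all four parts as standard consequences of the Mauldin--Williams graph-directed theory.

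Your sketch is a correct and serviceable unpacking of what lies behind that citation. Parts (1), (2), (4) are indeed direct quotations of \cite{Mauldin} once one notes that the common ratio $n^{-1}$ collapses the implicit dimension equation $\rho(n^{-s}\mathcal{N})=1$ to $s=\log\rho(\mathcal{N})/\log n$, and that restricting to the reachable set $V_v$ gives a block upper-triangular adjacency matrix whose spectral radius is $\max\{\rho(H_j):H_i\prec H_j\}$. For part (3), your ``if'' direction is fine: the open set condition guarantees that cylinders which first leave $H$ for $H'$ at different depths $k$ are disjoint (they differ at level $k+1$, landing in distinct SCCs), and Perron--Frobenius on the irreducible block for $H$ supplies the $c\rho^k$ count. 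Your ``only if'' direction is slightly loosely phrased---the clean way to say it is that when all critical SCCs are pairwise $\prec$-incomparable, every chain in the condensation DAG meets at most one of them, so the block-triangular structure forces $\|\mathcal{N}^k\|=O(\rho^k)$ (no polynomial correction), and the natural rank-$k$ cover then gives $\mathcal{H}^s(\bigcup_v K_v)<\infty$. That is the substance of your argument; the ``finite union of pieces'' formulation is not quite right as written but the counting it is meant to encode is.
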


For example, let the symbolic system
\begin{equation*}
X=\{\bar{v}=v_{i_{0}}v_{i_{1}}\cdots v_{i_{k}}\cdots :\text{the infinite
sequence }\bar{v}\text{ is admissible}\}
\end{equation*}%
and $X_{v}$ the collection of infinite admissible sequences starting from $v$%
. There is a metric on $X$ defined by
\begin{equation*}
\text{d}(v_{i_{0}}\cdots v_{i_{k}}v_{i_{k+1}}\cdots ,v_{i_{0}}\cdots
v_{i_{k}}v_{i_{k+1}^{\prime }}\cdots )=n^{-k}\text{ if }v_{i_{k+1}}\neq
v_{i_{k+1}^{\prime }}.
\end{equation*}%
As in Lemma \ref{L:s}, by results of \cite{Mauldin}, we conclude that if $s=%
\frac{\log \rho (\mathcal{N})}{\log n}>0$ then $\dim _{H}(X)=s$ and $%
\mathcal{H}^{s}(X)>0$ and
\begin{equation*}
\dim _{H}X_{v}=\max \left\{ \frac{\log \rho (H_{j})}{\log n}:H_{i}\prec
H_{j}\right\} \text{ where }v\in H_{i}.
\end{equation*}%
We also have the following

\begin{claim}
If $\dim _{H}X_{v}=0,$ then $X_{v}$ is countable.
\end{claim}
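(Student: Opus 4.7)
The plan is to use the dimension formula from Lemma \ref{L:s} (reused in the symbolic setting) to extract combinatorial structure on the components reachable from $v$, and then argue that each infinite admissible sequence is determined by a finite amount of data.

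First I would translate the hypothesis $\dim_H X_v = 0$ via the formula
\[
\dim_H X_v = \max\left\{\frac{\log \rho(H_j)}{\log n} : H_i \prec H_j\right\}, \qquad v \in H_i,
\]
which forces $\rho(H_j) \leq 1$ for every strongly connected component $H_j$ with $H_i \prec H_j$. Since $H_j$ is strongly connected, its restricted $0$–$1$ matrix has spectral radius in $\{0\} \cup [1,\infty)$: either $H_j$ is a single vertex with no self-loop ($\rho = 0$), or $H_j$ is a single vertex with a self-loop ($\rho = 1$), or $H_j$ has at least two vertices with $\rho \geq 1$. The Perron–Frobenius bound $\rho \geq \min_u \sum_{v} b_{uv}$ on the restricted matrix of a strongly connected graph shows that $\rho(H_j) = 1$ forces every vertex of $H_j$ to have exactly one outgoing edge lying inside $H_j$; in other words, $H_j$ is either an isolated vertex, an isolated vertex with loop, or a simple directed cycle.

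Next I would analyze the structure of an arbitrary $\bar{v} = v_{i_0} v_{i_1} \cdots \in X_v$. Because the partial order $\prec$ on components is antisymmetric, once the sequence leaves a component it can never return; since there are only finitely many components reachable from $v$, the sequence must eventually be trapped in some final component $H_{j_k}$. This final component must support an infinite admissible path, so it cannot be an isolated vertex without self-loop, hence $\rho(H_{j_k}) = 1$ and by the previous paragraph $H_{j_k}$ is either a single self-loop or a simple cycle. In both cases, once the entry vertex into $H_{j_k}$ is specified, the remainder of the sequence is uniquely determined.

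Therefore each element of $X_v$ is coded by a finite piece of data: the finite prefix from $v$ up to the first vertex of the final component $H_{j_k}$. The set of finite admissible paths in $G$ starting from $v$ is countable (finitely many of each length in a finite graph), and the map sending such a finite prefix to the unique infinite admissible extension in its final component is surjective onto $X_v$. Hence $X_v$ is countable, which proves the claim. The main delicate step is the characterization of strongly connected components with $\rho \leq 1$ and the observation that the final component cannot be a single loopless vertex; everything else is a straightforward bookkeeping argument over the finite graph $G$.
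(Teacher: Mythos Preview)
Your argument is correct and takes a more structural route than the paper. The paper's proof is a terse counting argument: after stating (without justification) that it suffices to handle a single strongly connected component $H_j$ with $\rho(H_j)\leq 1$, it bounds the number of admissible words of length $k$ lying entirely in $H_j$ uniformly in $k$ by comparing $(1,\dots,1)^T$ with the Perron eigenvector of the restricted matrix, and concludes that only finitely many infinite sequences live in $H_j$. You instead classify the components with $\rho\leq 1$ explicitly (isolated vertex, self-loop, simple cycle) and observe that the tail of any element of $X_v$ is eventually trapped in one of them and hence uniquely determined by its entry point. Your approach has the advantage of spelling out the reduction step (eventual trapping, via the finite partial order on components) that the paper leaves implicit.

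One small slip: the bound $\rho \geq \min_u \sum_{v} b_{uv}$ by itself only forces the \emph{minimum} out-degree inside $H_j$ to be $1$, not every out-degree. What you actually need is the strict Perron--Frobenius inequality for irreducible nonnegative matrices: if $A\mathbf{1}\geq \mathbf{1}$ componentwise with strict inequality in some coordinate, then $\rho(A)>1$. Taking the contrapositive (all row sums $\geq 1$ and $\rho\leq 1$) forces every row sum to equal $1$, and then your cycle characterization follows. With that one-line fix the proof is complete.
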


\begin{proof}
It suffices to show for strongly connected component $H_{j}$ with $\rho
(H_{j})\leq 1$ the collection of admissible infinite sequences with letters
in $H_{j}$ is a finite set$.$ Suppose $\gamma $ is a\ Perron-Frobenius
eigenvector $\gamma $ of the matrix $\mathcal{N}_{j}$ w.r.t. $H_{j}$ such
that $||\gamma ||_{1}=1,$ and $\mathcal{D}_{k}$ denotes the set of
admissible sequences of length $k$ such that every letter lies in $H_{j}.$
Using the irreducibility, there exists a constant $\varsigma >0$ such that
for all $k,$ \textbf{Card}$(\mathcal{D}_{k})\leq ||\mathcal{N}%
_{j}^{k}(1,\cdots ,1)^{T}||_{1}\leq \varsigma ||\mathcal{N}_{j}^{k}\gamma
||_{1}=\varsigma ||\rho (H_{j})^{k}\gamma ||_{1}\leq \varsigma .$
\end{proof}

\subsection{Multi Dynamical System}

$\ $

We will construct a \textbf{multi dynamical system} including many expanding
maps from integer intervals to $[m_{\ast },m^{\ast }].$ In fact, for every
integer interval $I:n^{-1}[u,u+1]\subset \lbrack m_{\ast },m^{\ast }],$ we
equip the interval with several expanding maps with factor $n$ as follows.
Suppose $I$ is of type $t$ with $u-(\mathbf{m},\mathbf{i})=t,$ we let $f_{%
\mathbf{i}}:I\rightarrow J_{t}=[t,t+1]$ denote the corresponding linear
surjection in the form%
\begin{equation*}
f_{\mathbf{i}}(x)=nx-u+t:I\rightarrow J_{t}.
\end{equation*}%
Then the multi dynamical system consists of $\{f_{\mathbf{i}}\}_{\mathbf{i}%
\in \prod_{i=1}^{l}\mathcal{A}_{i}}.$

The next lemma reveals the connection between the multi dynamical system and
intersections of hyperplanes with basic cubes.

\begin{lemma}
If $x\in I=n^{-1}[u,u+1]$ ($u\in \mathbb{Z}$) and $\mathbf{i}\in
\prod_{i=1}^{l}\mathcal{A}_{i}$ such that $I\subset (\mathbf{m},\mathbf{c}_{%
\mathbf{i}})$, then
\begin{equation}
H_{x}\bigcap S_{\mathbf{i}}([0,1]^{l})=S_{\mathbf{i}}(H_{f_{\mathbf{i}%
}(x)}\cap \lbrack 0,1]^{l}).  \label{g}
\end{equation}
\end{lemma}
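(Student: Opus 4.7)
The plan is to verify the set equality by a direct, element-by-element computation, exploiting the fact that $S_{\mathbf{i}}$ is an affine bijection from $[0,1]^{l}$ onto the small cube $\mathbf{c}_{\mathbf{i}}$ with contraction ratio $n^{-1}$, so the inclusion of sets reduces to an identity in the dot-product with $\mathbf{m}$.

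First, I would unpack both sides in terms of the parameterization $\mathbf{y}=S_{\mathbf{i}}(\mathbf{z})=(\mathbf{z}+\mathbf{i})/n$, $\mathbf{z}\in[0,1]^{l}$. Since $S_{\mathbf{i}}$ is a bijection, a point $\mathbf{y}\in S_{\mathbf{i}}([0,1]^{l})$ has a unique such $\mathbf{z}$, and thus $H_{x}\cap S_{\mathbf{i}}([0,1]^{l})=S_{\mathbf{i}}(\{\mathbf{z}\in[0,1]^{l}:(\mathbf{m},S_{\mathbf{i}}(\mathbf{z}))=x\})$. So it suffices to show that for $\mathbf{z}\in[0,1]^{l}$,
\begin{equation*}
(\mathbf{m},S_{\mathbf{i}}(\mathbf{z}))=x \quad\Longleftrightarrow\quad (\mathbf{m},\mathbf{z})=f_{\mathbf{i}}(x).
\end{equation*}

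Second, I would perform the key linear computation. By linearity of the inner product,
\begin{equation*}
(\mathbf{m},S_{\mathbf{i}}(\mathbf{z}))=\bigl((\mathbf{m},\mathbf{z})+(\mathbf{m},\mathbf{i})\bigr)/n,
\end{equation*}
so $(\mathbf{m},S_{\mathbf{i}}(\mathbf{z}))=x$ is equivalent to $(\mathbf{m},\mathbf{z})=nx-(\mathbf{m},\mathbf{i})$. Now invoke the type hypothesis: since $I=n^{-1}[u,u+1]\subset(\mathbf{m},\mathbf{c}_{\mathbf{i}})$, the definition of type gives $t=t(I)=u-(\mathbf{m},\mathbf{i})$, hence $(\mathbf{m},\mathbf{i})=u-t$. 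Substituting,
\begin{equation*}
nx-(\mathbf{m},\mathbf{i})=nx-u+t=f_{\mathbf{i}}(x),
\end{equation*}
which is exactly the desired equivalence.

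Third, I would verify the range condition $f_{\mathbf{i}}(x)\in J_{t}=[t,t+1]$. Because $x\in I=n^{-1}[u,u+1]$, the affine map $f_{\mathbf{i}}(\xi)=n\xi-u+t$ sends $I$ linearly onto $[t,t+1]$, so $f_{\mathbf{i}}(x)$ is a legitimate argument for the hyperplane $H_{f_{\mathbf{i}}(x)}$ and the intersection with $[0,1]^{l}$ is well-defined. Combining the two directions of the equivalence then yields the set equality \eqref{g}.

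There is no real obstacle here; the whole statement is a bookkeeping lemma, and the only point requiring any care is matching the constant $t$ coming from the definition of the type of $I$ with the shift appearing in $f_{\mathbf{i}}$, which the hypothesis $I\subset(\mathbf{m},\mathbf{c}_{\mathbf{i}})$ arranges automatically.
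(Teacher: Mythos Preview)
Your proof is correct. The paper actually states this lemma without proof (treating it as an immediate bookkeeping fact), and your direct computation via the affine change of variables $\mathbf{y}=S_{\mathbf{i}}(\mathbf{z})$ together with the identity $nx-(\mathbf{m},\mathbf{i})=nx-u+t=f_{\mathbf{i}}(x)$ is exactly the intended verification.
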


Using (\ref{g}) again and again, we obtain

\begin{lemma}
\label{L:why}If $x\in (\mathbf{m},\mathbf{c}_{\mathbf{i}_{1}})$ and $(f_{%
\mathbf{i}_{j}}\circ \cdots \circ f_{\mathbf{i}_{1}})(x)\in (\mathbf{m},%
\mathbf{c}_{\mathbf{i}_{j+1}})$ for all $1\leq j\leq k-1,$ then
\begin{equation*}
H_{x}\bigcap S_{\mathbf{i}_{1}\mathbf{i}_{2}\cdots \mathbf{i}%
_{k}}([0,1]^{l})=S_{\mathbf{i}_{1}\mathbf{i}_{2}\cdots \mathbf{i}%
_{k}}(H_{(f_{\mathbf{i}_{k}}\circ \cdots \circ f_{\mathbf{i}_{1}})(x)}\cap
\lbrack 0,1]^{l})
\end{equation*}
\end{lemma}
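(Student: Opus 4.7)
The plan is to prove Lemma \ref{L:why} by induction on $k$, with the base case $k=1$ being the previous lemma (equation (\ref{g})). The inductive step will peel off the outermost map $S_{\mathbf{i}_1}$ using (\ref{g}) and then apply the inductive hypothesis to the residue.

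For the inductive step, assume the identity holds for chains of length $k-1$. Given $x\in(\mathbf{m},\mathbf{c}_{\mathbf{i}_1})$ with the compatibility chain up to length $k$, I would first observe that $S_{\mathbf{i}_1\cdots\mathbf{i}_k}([0,1]^l)\subset S_{\mathbf{i}_1}([0,1]^l)$, so
\begin{equation*}
H_x\cap S_{\mathbf{i}_1\mathbf{i}_2\cdots\mathbf{i}_k}([0,1]^l)=\bigl(H_x\cap S_{\mathbf{i}_1}([0,1]^l)\bigr)\cap S_{\mathbf{i}_1\mathbf{i}_2\cdots\mathbf{i}_k}([0,1]^l).
\end{equation*}
Applying (\ref{g}) with $\mathbf{i}=\mathbf{i}_1$ and using injectivity of the similarity $S_{\mathbf{i}_1}$, this becomes
\begin{equation*}
S_{\mathbf{i}_1}\bigl(H_{f_{\mathbf{i}_1}(x)}\cap S_{\mathbf{i}_2\cdots\mathbf{i}_k}([0,1]^l)\bigr).
\end{equation*}

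Next, I would verify that the hypotheses of the lemma (for chain length $k-1$) are satisfied by the point $f_{\mathbf{i}_1}(x)$ with the sequence $\mathbf{i}_2,\ldots,\mathbf{i}_k$: the condition $f_{\mathbf{i}_1}(x)\in(\mathbf{m},\mathbf{c}_{\mathbf{i}_2})$ is exactly the given compatibility at $j=1$, and for $2\leq j\leq k-1$ the condition $(f_{\mathbf{i}_j}\circ\cdots\circ f_{\mathbf{i}_2})(f_{\mathbf{i}_1}(x))\in(\mathbf{m},\mathbf{c}_{\mathbf{i}_{j+1}})$ is just the given compatibility restated. Invoking the inductive hypothesis then yields
\begin{equation*}
H_{f_{\mathbf{i}_1}(x)}\cap S_{\mathbf{i}_2\cdots\mathbf{i}_k}([0,1]^l)=S_{\mathbf{i}_2\cdots\mathbf{i}_k}\bigl(H_{(f_{\mathbf{i}_k}\circ\cdots\circ f_{\mathbf{i}_1})(x)}\cap[0,1]^l\bigr),
\end{equation*}
and composing with $S_{\mathbf{i}_1}$ gives the desired identity, since $S_{\mathbf{i}_1}\circ S_{\mathbf{i}_2\cdots\mathbf{i}_k}=S_{\mathbf{i}_1\mathbf{i}_2\cdots\mathbf{i}_k}$.

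There is no real obstacle here; the entire content is bookkeeping. The only subtle point worth being careful about is the bookkeeping of the hypotheses in the inductive step — verifying that the compatibility conditions of the full chain restrict correctly to the shorter chain starting from $f_{\mathbf{i}_1}(x)$ — and the use of injectivity of $S_{\mathbf{i}_1}$ to pull the intersection inside the map. Everything else follows by iterated application of (\ref{g}), matching the author's own hint that the proof consists of using (\ref{g}) "again and again."
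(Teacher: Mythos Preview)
Your proposal is correct and is exactly the formalization of the paper's one-line hint ``using (\ref{g}) again and again'': an induction on $k$ with (\ref{g}) as the base case and the inductive step peeling off $S_{\mathbf{i}_1}$. There is nothing to add.
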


When $I\in \Xi ,$ since there is only one $\mathbf{i}\in \prod_{i=1}^{l}%
\mathcal{A}_{i}$ such that $I\subset (\mathbf{m},\mathbf{c}_{\mathbf{i}})$
and thus the type $t$ is uniquely determined by $I,$ we can write $f_{%
\mathbf{i}}$ to be $f_{I}$ for convenience$.$

If there is an admissible sequence $I_{i_{0}}I_{i_{1}}\cdots
I_{i_{k-1}}I_{i_{k}}\cdots $ of $G_{\Xi }\ $such that
\begin{equation}
x\in \text{int}(I_{i_{0}})\text{ and }g_{k}(x)=(f_{I_{i_{k-1}}}\circ \cdots
\circ f_{I_{i_{0}}})(x)\in \text{int}(I_{i_{k+1}})\text{ for all }k,
\label{tt}
\end{equation}%
then we say that $x\in \lbrack m_{\ast },m^{\ast }]$ has an \textbf{infinite
coding} $I_{i_{0}}I_{i_{1}}\cdots I_{i_{k-1}}I_{i_{k}}\cdots $ of $G_{\Xi }.$
By Lemma \ref{L:why} we have

\begin{lemma}
Let $g_{k}(x)=f_{I_{i_{k-1}}}\circ \cdots \circ f_{I_{i_{0}}}(x)$ be defined
as above. Then%
\begin{equation}
H_{x}\bigcap S_{\mathbf{i}_{1}\mathbf{i}_{2}\cdots \mathbf{i}%
_{k}}([0,1]^{l})=S_{\mathbf{i}_{1}\mathbf{i}_{2}\cdots \mathbf{i}%
_{k}}(H_{g_{k}(x)}\cap \lbrack 0,1]^{l}).  \label{g1}
\end{equation}
\end{lemma}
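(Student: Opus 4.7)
The plan is to reduce this identity to Lemma \ref{L:why} by carefully translating the $G_{\Xi}$-coding $I_{i_{0}}I_{i_{1}}\cdots $ into a sequence of cube multi-indices $\mathbf{i}_{1}\mathbf{i}_{2}\cdots \mathbf{i}_{k}$. First I would recall that, by the very definition of $\Xi$, for each vertex $I_{i_{j-1}}\in \Xi$ there is a \emph{unique} $\mathbf{i}_{j}\in \prod_{i=1}^{l}\mathcal{A}_{i}$ with $I_{i_{j-1}}\subset (\mathbf{m},\mathbf{c}_{\mathbf{i}_{j}})$; this is exactly the multi-index for which the notational convention $f_{I_{i_{j-1}}}=f_{\mathbf{i}_{j}}$ (introduced just before (\ref{tt})) makes sense. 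Thus the sequence $\mathbf{i}_{1}\mathbf{i}_{2}\cdots \mathbf{i}_{k}$ used in the statement is uniquely extracted from the coding, and $g_{k}(x)=f_{\mathbf{i}_{k}}\circ \cdots \circ f_{\mathbf{i}_{1}}(x)$.

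Next I would verify that the hypotheses of Lemma \ref{L:why} are in force. Condition (\ref{tt}) provides $x\in \text{int}(I_{i_{0}})$, so in particular $x\in I_{i_{0}}\subset (\mathbf{m},\mathbf{c}_{\mathbf{i}_{1}})$. For $1\leq j\leq k-1$, (\ref{tt}) also gives
\[
(f_{\mathbf{i}_{j}}\circ \cdots \circ f_{\mathbf{i}_{1}})(x)=g_{j}(x)\in \text{int}(I_{i_{j}})\subset (\mathbf{m},\mathbf{c}_{\mathbf{i}_{j+1}}),
\]
which are exactly the hypotheses needed.

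With these checks in place, applying Lemma \ref{L:why} directly yields the claimed identity (\ref{g1}). Since Lemma \ref{L:why} itself was obtained by iterating the one-step identity (\ref{g}), the real content of this lemma is already contained in those two steps; here the work is purely bookkeeping, matching the symbolic data of an infinite $G_{\Xi}$-coding to the geometric data of nested basic cubes.

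The only (minor) obstacle is the notational translation between $\Xi$-vertices and multi-indices in $\prod_{i=1}^{l}\mathcal{A}_{i}$; the uniqueness built into the definition of $\Xi$ makes this canonical, so no further geometric argument is required beyond the earlier lemmas.
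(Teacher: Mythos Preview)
Your proposal is correct and follows exactly the paper's approach: the paper's entire proof is the one-line ``By Lemma \ref{L:why} we have,'' and you have simply made explicit the notational bookkeeping (extracting $\mathbf{i}_{j}$ from $I_{i_{j-1}}$ via the uniqueness in $\Xi$, and verifying via (\ref{tt}) that the hypotheses of Lemma \ref{L:why} hold) that the paper leaves to the reader.
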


Let
\begin{equation*}
\Lambda =\{x\in \lbrack m_{\ast },m^{\ast }]:x\text{ has an infinite coding
in }\Xi \}.
\end{equation*}%
On the other hand, given an admissible sequence $I_{i_{0}}I_{i_{1}}\cdots
I_{i_{k-1}}I_{i_{k}}\cdots $ in $G_{\Xi },$ by the theorem of nested
interval, there exists a unique $x\in \lbrack m_{\ast },m^{\ast }]$ such that%
\begin{equation}
x\in I_{i_{0}}\text{ and }g_{k}(x)\in I_{i_{k+1}}\text{ for all }k.
\end{equation}%
That means
\begin{eqnarray*}
\bar{\Lambda}=\{x\in \lbrack m_{\ast },m^{\ast }]:
&&I_{i_{0}}I_{i_{1}}\cdots I_{i_{k-1}}I_{i_{k}}\cdots \text{ is admissible
in }G_{\Xi }\text{ such that} \\
&&x\in I_{i_{0}}\text{ and }g_{k}(x)\in I_{i_{k+1}}\text{ for all }k\}
\end{eqnarray*}%
and $\bar{\Lambda}\backslash \Lambda \subset \{\frac{q_{1}}{n^{q_{2}}}%
\}_{q_{1},q_{2}\in \mathbb{Z}}.$

We will consider a graph-directed construction induced by $\Xi .$ Suppose $%
I_{1},I_{2}\in \Xi $ with $I_{1}\rightarrow I_{2},$ the mapping
\begin{equation*}
g_{(I_{1},I_{2})}=(f_{I_{1}})^{-1}|_{I_{2}}:I_{2}\rightarrow I_{1}
\end{equation*}%
with contracting ratio $n^{-1}.$ Note that the open set condition holds for
this graph-directed construction. By the classical result of \cite{Mauldin},
there is a unique family of compact subsets $\{F_{I}\}_{I}$ such that $%
F_{I}\subset I$ for all $I\in \Xi $ and
\begin{equation*}
F_{I}=\bigcup\nolimits_{J\in \mathcal{E}(I)}g_{(I,J)}(F_{J}),
\end{equation*}%
where $\mathcal{E}(I)$ denotes the set of ending vertex of the edge starting
from $I.$ Now, we have%
\begin{equation}
\bar{\Lambda}=\bigcup\nolimits_{I\in \Xi }F_{I}\text{ with }\dim _{H}(\bar{%
\Lambda})=s.  \label{8}
\end{equation}%
Moreover, if $s>0$ then $\mathcal{H}^{s}\left( \bar{\Lambda}\right) >0$
under the open set condition$.$ Note that
\begin{equation*}
\bar{\Lambda}\backslash \Lambda \subset \{\frac{q_{1}}{n^{q_{2}}}%
\}_{q_{1},q_{2}\in \mathbb{Z}}
\end{equation*}%
and $\dim _{H}(\Lambda )\leq \dim _{H}(\bar{\Lambda}),$ by  Lemma \ref{L:s}, we have

\begin{lemma}
\label{L:1}Let $s=\frac{\log \rho (M)}{\log n}.$ We have \newline
(1) $\dim _{H}\Lambda =s;$ \newline
(2) If $s>0,$ then $\mathcal{H}^{s}(\Lambda )>0,$ moreover, $\mathcal{H}%
^{s}(\Lambda )=\infty $ if and only if there are two strongly connected
components $\Xi _{i}$ and $\Xi _{j}$ in $\Xi $ such that $\rho (M_{i})=\rho
(M_{j})=\rho (M)$ and $\Xi _{i}\prec \Xi _{j}.$ In particular, if $M$ is
irreducible, then $0<\mathcal{H}^{s}(\Lambda )<\infty .$
\end{lemma}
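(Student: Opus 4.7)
The plan is to reduce every claim about $\Lambda$ to the corresponding statement about $\bar{\Lambda}$ from equation (\ref{8}), and then invoke Lemma \ref{L:s} applied to the graph-directed construction $\{F_I\}_{I\in\Xi}$ on $G_\Xi$ with matrix $M$. Since each $F_I\subset I$ and the interiors of the integer intervals $\{I\}_{I\in\Xi}$ are disjoint, the open set condition holds with $U_I={\rm int}(I)$, so Lemma \ref{L:s} applies directly to $\bar{\Lambda}=\bigcup_{I\in\Xi} F_I$ and yields $\dim_H\bar{\Lambda}=s$, positivity $\mathcal{H}^s(\bar{\Lambda})>0$ when $s>0$, and the infinite-measure dichotomy in terms of the existence of two distinct strongly connected components $\Xi_i\prec\Xi_j$ of $G_\Xi$ with $\rho(M_i)=\rho(M_j)=\rho(M)$.

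The transfer from $\bar{\Lambda}$ to $\Lambda$ relies on the inclusion $\bar{\Lambda}\setminus\Lambda\subset\{q_1/n^{q_2}\}_{q_1,q_2\in\mathbb{Z}}$ recorded just above the lemma, which makes $\bar{\Lambda}\setminus\Lambda$ countable and therefore $\mathcal{H}^t$-null for every $t>0$. For part (1), the chain $\Lambda\subset\bar{\Lambda}$ gives $\dim_H\Lambda\le s$, and when $s>0$ the countability of the difference yields $\mathcal{H}^s(\Lambda)=\mathcal{H}^s(\bar{\Lambda})>0$, forcing $\dim_H\Lambda\ge s$; the case $s=0$ is immediate from non-negativity of Hausdorff dimension. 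Part (2)'s positivity is exactly the identity $\mathcal{H}^s(\Lambda)=\mathcal{H}^s(\bar{\Lambda})$ just noted, and the infinite-measure characterization then transfers verbatim from Lemma \ref{L:s}(3) since removing a countable set does not change $\mathcal{H}^s$.

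For the irreducible case, $G_\Xi$ has a single strongly connected component, so no pair of \emph{distinct} components can witness the dichotomy condition; thus $\mathcal{H}^s(\bar{\Lambda})<\infty$ by Lemma \ref{L:s}(3), and combined with $\mathcal{H}^s(\Lambda)\le\mathcal{H}^s(\bar{\Lambda})$ and the already-established positivity we conclude $0<\mathcal{H}^s(\Lambda)<\infty$. There is no serious obstacle in this proof: every claim is a formal consequence of (\ref{8}), Lemma \ref{L:s}, and the countability of $\bar{\Lambda}\setminus\Lambda$. The only minor point requiring care is ensuring that the graph-directed system encoded by $G_\Xi$ and the contractions $g_{(I,J)}=(f_{I})^{-1}|_J$ really does satisfy the open set condition hypothesis of Lemma \ref{L:s}, which follows because distinct integer intervals have disjoint interiors and the maps $g_{(I,J)}$ send $J\subset J_{t(I)}$ back into $I$ via the unique linear inverse of $f_I$.
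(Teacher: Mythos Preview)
Your proposal is correct and follows exactly the approach the paper takes: the paper deduces Lemma~\ref{L:1} directly from Lemma~\ref{L:s} via equation~(\ref{8}) and the countability of $\bar{\Lambda}\setminus\Lambda$, which is precisely your reduction. If anything, you have spelled out more carefully than the paper does why the open set condition holds for the graph-directed system on $G_{\Xi}$ and how the case $s=0$ is handled.
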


One can check the following lemma directly.

\begin{lemma}
\label{L:A}Suppose $\omega $ is a congruent subset of $\Xi $ and $\Omega
_{i} $ is the strongly connected component of $G^{\ast }$ containing $\omega
.$ Let
\begin{equation*}
A_{\omega }=\{z:\text{vector }(z+p)_{p\in \mathcal{D}(\omega )}\text{ has an
infinite coding in }G^{\ast }\text{ starting from }\omega \}.
\end{equation*}%
Then $d_{\omega }=\dim _{H}A_{\omega }=\max \left\{ \frac{\log \rho (\Omega
_{j})}{\log n}:\Omega _{i}\prec \Omega _{j}\right\} $. If $d_{\omega }>0,$
then%
\begin{equation*}
H^{d_{\omega }}(A_{\omega })>0.
\end{equation*}
\end{lemma}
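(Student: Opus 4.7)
The plan is to exhibit $A_\omega$ as (essentially) the attractor at the vertex $\omega$ of a graph-directed iterated function system whose underlying graph is $G^\ast$, and then to invoke Lemma \ref{L:s}(4) for the dimension formula and Lemma \ref{L:s}(2) for the positivity of the $d_\omega$-dimensional Hausdorff measure.

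First I would normalize the parameter. Any congruent subset $\omega \subset \Xi$ has all of its integer intervals $n^{-1}[u,u+1]$ sharing a common residue $r_\omega \in \{0,1,\ldots,n-1\}$ modulo $n$; the requirement that $z+p$ lie in the corresponding interval of $\omega$ for every $p \in \mathcal{D}(\omega)$ forces $z \in [r_\omega/n,(r_\omega+1)/n]$. A direct calculation using the definition of $\psi$, combined with the fact that when $I^p \to I^p_1$ in the first graph the base integer of the working interval containing $I^p_1$ is $t(I^p)$, shows that $z' := \psi(z+p) - t(I^p) = nz - r_\omega$ is independent of $p \in \mathcal{D}(\omega)$; moreover, if $z$ has coding $\omega \omega' \omega_2 \cdots$ in $G^\ast$, then $z'$ has coding $\omega' \omega_2 \cdots$ and lies in $(r_{\omega'}/n,(r_{\omega'}+1)/n)$. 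Since the sub-intervals $(k/n,(k+1)/n)$ for $k=0,\ldots,n-1$ partition the $z'$-range $[0,1)$ and the next letter $\omega'$ is determined by $z$, at most one outgoing edge of $\omega$ in $G^\ast$ has a target of any prescribed residue.

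To each directed edge $e : \omega \to \omega'$ of $G^\ast$ I then attach the contraction $g_e(z') := (z' + r_\omega)/n$ of ratio $n^{-1}$, which maps $[r_{\omega'}/n,(r_{\omega'}+1)/n]$ into $[r_\omega/n,(r_\omega+1)/n]$. With the open sets $U_\omega := (r_\omega/n,(r_\omega+1)/n)$ the preceding paragraph yields the open set condition: the images of distinct outgoing edges of $\omega$ land in pairwise disjoint sub-intervals of $U_\omega$ of length $n^{-2}$. The graph-directed attractor $F_\omega$ coincides with $A_\omega$ modulo the countable dyadic-like set $\{q_1/n^{q_2}\}_{q_1,q_2\in\mathbb{Z}}$ that the definition of an infinite coding already excludes, so Hausdorff dimension and positive $d$-dimensional Hausdorff measure transfer between them. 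The adjacency matrix of this graph-directed system is precisely that of $G^\ast$, whose strongly connected components are the $\{\Omega_j\}_j$; applying Lemma \ref{L:s}(4) to $\omega \in \Omega_i$ gives
\[
d_\omega = \dim_H A_\omega = \max\left\{\frac{\log \rho(\Omega_j)}{\log n} : \Omega_i \prec \Omega_j\right\},
\]
and the second half of that statement gives $\mathcal{H}^{d_\omega}(A_\omega) > 0$ whenever $d_\omega > 0$.

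The main obstacle is the bookkeeping in the first step: verifying that the correspondence $z \leftrightarrow z' = nz - r_\omega$ intertwines $G^\ast$-codings at $\omega$ with $G^\ast$-codings at $\omega'$ in both directions. This rests on the edge-characterization in the first graph (which identifies $t(I^p)$ with the base integer of the working interval containing $I^p_1$) and on the fact that $\omega'$ is by definition the \emph{smallest} congruent subset of $\Xi$ containing $\{\psi(z+p)\}_{p\in\mathcal{D}(\omega)}$, so nothing is lost in switching between codings and parametrizations. Once this is settled, the dimension and measure assertions are a direct application of Lemma \ref{L:s}.
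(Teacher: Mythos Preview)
Your proposal is correct and is precisely the argument the paper has in mind: in the text the authors simply write ``One can check the following lemma directly'' and give no proof, so you have supplied the intended graph-directed construction and invoked Lemma~\ref{L:s}(4). One small sharpening: your claim that at most one outgoing edge of $\omega$ in $G^\ast$ targets a vertex of any prescribed residue is best seen directly from the edge definition---if $\omega\to\omega'$, the two surjectivity conditions force $\mathcal{D}(\omega')=\{t(I^p):p\in\mathcal{D}(\omega)\}$, so $\omega'$ is determined by $\omega$ and $r_{\omega'}$---rather than via the coding of a particular $z$; otherwise the bookkeeping is sound.
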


\subsection{Counting Formula}

$\ $

For $z\in \lbrack 0,1),$ we let
\begin{equation*}
\alpha (z)=(\text{\textbf{Card}}(S_{z+m_{\ast }}),\text{\textbf{Card}}%
(S_{z+m_{\ast }+1}),\cdots ,\text{\textbf{Card}}(S_{z+m^{\ast }-1}))^{T},
\end{equation*}%
and%
\begin{equation*}
\sigma :[0,1)\rightarrow \lbrack 0,1)\text{\ such that\ }\sigma z=nz(\text{%
mod 1}).
\end{equation*}%
Let $\{x\}\in \lbrack 0,1)$ denote the decimal part of $x=[x]+\{x\},$ e.g. $%
\{-3.4\}=0.6$.

Using these notations and matrix $T_{j}$ in Section 1$\ $and results in \cite%
{LXZ}, we have

\begin{lemma}
\label{L:c}Suppose the covering condition holds. Let $x\in \lbrack m_{\ast
},m^{\ast }]\backslash \{\frac{q_{1}}{n^{q_{2}}}\}_{q_{1},q_{2}\in \mathbb{Z}%
}$. If%
\begin{equation*}
x=i+n^{-1}j_{1}+n^{-2}j_{2}+\cdots +n^{-k}j_{k}+\cdots ,
\end{equation*}%
i.e., $x-i$ has $n$-adic expansion $0.j_{1}j_{2}\cdots j_{k}\cdots ,$ then $%
||e_{i}T_{j_{1}}\cdots T_{j_{k}}||_{1}$ is the number of basic cubes of rank
$k$ intersecting the hyperplane $H_{x}.$ Moreover, if the strong separation
condition holds for each $K_{i}$, then
\begin{equation}
\text{\emph{\textbf{Card}}}(S_{x})=e_{i}T_{j_{1}}\cdots T_{j_{k}}\alpha
(\sigma ^{k}\{x\}).  \label{2222}
\end{equation}
\end{lemma}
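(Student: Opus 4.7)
The plan is to induct on $k$ for both assertions, using the multi dynamical system $\{f_{\mathbf{i}}\}$ together with Lemma~\ref{L:why} to translate each level of subdivision into one application of a transition matrix. The key inductive invariant I would maintain is: for $x\notin \{q_{1}/n^{q_{2}}\}$ with expansion $x=i+0.j_{1}j_{2}\cdots $, the $v$-th coordinate of the row vector $e_{i}T_{j_{1}}\cdots T_{j_{k}}$ equals the number of sequences $(\mathbf{i}_{1},\ldots ,\mathbf{i}_{k})$ which are admissible for $x$ in the sense that $x\in (\mathbf{m},\mathbf{c}_{\mathbf{i}_{1}})$ and $g_{s}(x)\in (\mathbf{m},\mathbf{c}_{\mathbf{i}_{s+1}})$ for $s<k$, and which satisfy $g_{k}(x)\in J_{v}$, where $g_{k}=f_{\mathbf{i}_{k}}\circ \cdots \circ f_{\mathbf{i}_{1}}$. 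Under the covering condition this admissibility is precisely the condition $H_{x}\cap S_{\mathbf{i}_{1}\cdots \mathbf{i}_{k}}([0,1]^{l})\neq \emptyset $, so summing the invariant over $v$ yields the $\Vert \cdot \Vert _{1}$ assertion.

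The base case $k=1$ is immediate from the definition of $T_{j_{1}}$: since $x$ is not an $n$-adic rational, $x\in \text{int}(I_{(i,j_{1})})$, and by the covering condition together with the $k=1$ instance of Lemma~\ref{L:why}, $H_{x}$ meets $\mathbf{c}_{\mathbf{i}_{1}}$ iff $I_{(i,j_{1})}\subset (\mathbf{m},\mathbf{c}_{\mathbf{i}_{1}})$ iff $I_{(i,j_{1})}$ has some type $v\in \lbrack m_{\ast },m^{\ast }-1]$ with respect to $\mathbf{c}_{\mathbf{i}_{1}}$; the count of such $\mathbf{i}_{1}$ per type $v$ is by definition $b_{i,v}^{(j_{1})}$. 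For the inductive step, given an admissible sequence of length $k$ with $g_{k}(x)\in J_{v}$, I would observe that $g_{k}(x)-v$ has $n$-adic expansion $0.j_{k+1}j_{k+2}\cdots $ and that $g_{k}(x)$ is still not an $n$-adic rational (as $x$ is not), so applying the base-case argument to $g_{k}(x)$ produces exactly $b_{v,v'}^{(j_{k+1})}$ admissible extensions $\mathbf{i}_{k+1}$ with $g_{k+1}(x)\in J_{v'}$. This is a single right-multiplication by $T_{j_{k+1}}$, and Lemma~\ref{L:why} preserves the equivalence with non-empty intersection at rank $k+1$, closing the induction.

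For the cardinality identity, the strong separation condition on each $K_{i}$ makes the union $\prod_{i}K_{i}=\bigcup_{\mathbf{i}}S_{\mathbf{i}}(\prod_{i}K_{i})$ disjoint at every level, so
\[
S_{x}\;=\;\bigsqcup_{(\mathbf{i}_{1},\ldots ,\mathbf{i}_{k})\text{ adm.}}H_{x}\cap S_{\mathbf{i}_{1}\cdots \mathbf{i}_{k}}\!\!\left( \prod\nolimits_{i=1}^{l}K_{i}\right),
\]
and Lemma~\ref{L:why} identifies each summand with $S_{\mathbf{i}_{1}\cdots \mathbf{i}_{k}}(S_{g_{k}(x)})$, a bijective image of cardinality $|S_{g_{k}(x)}|$. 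Because $g_{k}(x)\in J_{v}$ forces $g_{k}(x)=v+\sigma ^{k}\{x\}$, the contribution of sequences with $g_{k}(x)\in J_{v}$ is $(e_{i}T_{j_{1}}\cdots T_{j_{k}})_{v}\cdot \textbf{Card}(S_{\sigma ^{k}\{x\}+v})$, and summing over $v$ gives the row-column product $e_{i}T_{j_{1}}\cdots T_{j_{k}}\,\alpha (\sigma ^{k}\{x\})$.

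The main technical obstacle will be the equivalence between ``$H_{x}$ intersects the rank-$k$ basic cube'' and ``$(\mathbf{i}_{1},\ldots ,\mathbf{i}_{k})$ is admissible for $x$ in the multi dynamical system.'' This rests on the covering condition (giving the forward implication at every level via Lemma~\ref{L:why}) and on the exclusion of $n$-adic rationals, which ensures that each iterate $g_{s}(x)$ lies in the interior of a unique integer interval so that the digits $j_{s+1}$ and the types $v$ are unambiguously determined. Once this equivalence is established cleanly at every level, the matrix counting for part one and the disjointness bookkeeping for part two become routine.
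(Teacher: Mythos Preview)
Your argument is correct. The paper does not actually prove Lemma~\ref{L:c}; it simply attributes the statement to the results of \cite{LXZ} and records it without argument. Your induction on $k$, carrying the invariant that the $v$-th coordinate of $e_{i}T_{j_{1}}\cdots T_{j_{k}}$ counts admissible sequences $(\mathbf{i}_{1},\ldots ,\mathbf{i}_{k})$ with $g_{k}(x)\in J_{v}$, together with the disjoint decomposition of $S_{x}$ under strong separation and the identification $H_{x}\cap S_{\mathbf{i}_{1}\cdots \mathbf{i}_{k}}(\prod_{i}K_{i})=S_{\mathbf{i}_{1}\cdots \mathbf{i}_{k}}(S_{g_{k}(x)})$, is precisely the natural self-contained route and supplies what the paper leaves to the reference. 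One incidental remark: the equivalence you flag as the ``main technical obstacle'' between non-empty intersection $H_{x}\cap S_{\mathbf{i}_{1}\cdots \mathbf{i}_{k}}([0,1]^{l})\neq \emptyset $ and admissibility in the multi dynamical system in fact holds without the covering condition, since the projection intervals $(\mathbf{m},S_{\mathbf{i}_{1}\cdots \mathbf{i}_{k}}([0,1]^{l}))$ are nested and Lemma~\ref{L:why} (via the computation $S_{\mathbf{i}}^{-1}(H_{x})=H_{f_{\mathbf{i}}(x)}$) handles both directions; the hypothesis is harmless here but not load-bearing for this lemma.
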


\bigskip

\section{Dimension of Set with Unique Solution}

\begin{proof}[Proof of Theorem \protect\ref{T:1}]
$\ $

(1) We can show that $\dim _{H}U_{1}\geq s.$ By Lemma \ref{L:1}, we only
need to verify that%
\begin{equation}
\Lambda \backslash \{\frac{q_{1}}{n^{q_{2}}}\}_{q_{1},q_{2}\in \mathbb{Z}%
}\subset U_{1}.  \label{11}
\end{equation}%
In fact, suppose $x\in \Lambda \backslash \{\frac{q_{1}}{n^{q_{2}}}%
\}_{q_{1},q_{2}\in \mathbb{Z}}$ has an infinite coding $I_{i_{1}}\cdots
I_{i_{k}}\cdots $ in $G_{\Xi },$ then $g_{k}(x)\in $int($I_{i_{k+1}}$) for
all $k$ as in (\ref{tt})-(\ref{g1}). From the definition of coding and (\ref%
{g})-(\ref{g1}), we obtain a family of nested cubes $\{Q_{k}\}_{k}$\ such
that $Q_{k}$ is a basic cube of rank $k$ and $\left(
\prod_{i=1}^{l}K_{i}\right) \cap H_{x}\subset Q_{k}$ for all $k,$ which
implies that the intersection $\left( \prod_{i=1}^{l}K_{i}\right) \cap H_{x}$
is a singleton. Hence (\ref{11}) follows. It follows from (\ref{11}) and
Lemma \ref{L:1} that
\begin{equation*}
\mathcal{H}^{s}(U_{1})\geq \mathcal{H}^{s}(\Lambda )>0\text{ if }s>0.
\end{equation*}%
If $s=0,$ for getting $\mathcal{H}^{s}(U_{1})>0,$ we only need to show that $%
U_{1}$ is non-empty. In fact, we find \textbf{Card}$(S_{x})=1$ for $%
x=\sum\nolimits_{m_{i}<0}m_{i}(\min_{x_{i}\in
K_{i}}x_{i})+\sum_{m_{i}>0}m_{i}(\max_{x_{i}\in K_{i}}x_{i})$ which implies $%
U_{1}\neq \emptyset $.

(2) Suppose the covering condition holds. Assume that $x\notin \{\frac{q_{1}%
}{n^{q_{2}}}\}_{q_{1},q_{2}\in \mathbb{Z}}$ with \textbf{Card}$(S_{x})=1$
and $\mathbf{y}\in \prod_{i=1}^{l}K_{i}$ is the unique solution such that $(%
\mathbf{m},\mathbf{y})=x.$ Denote by $N_{k}$\ the number of basic cubes with
rank $k$ which contain $\mathbf{y}.$ By the covering condition and Lemma 1,
we obtain
\begin{equation*}
N_{k+1}\geq N_{k}\text{ for all }k.
\end{equation*}%
Note that $N_{k}\leq 2^{l}$ for all $k.$ Therefore, there is an integer $%
k_{0}$ such that $N_{k_{0}}=\max_{k}N_{k}.$ Fix a basic cube $S_{\mathbf{i}%
_{1}\cdots \mathbf{i}_{k_{0}}}([0,1]^{l})$ of rank $k_{0}$ containing $%
\mathbf{y}$, by Lemma \ref{L:why} we obtain that
\begin{equation*}
(f_{\mathbf{i}_{k_{0}}}\circ \cdots \circ f_{\mathbf{i}_{1}})(x)\text{ has
an infinite coding of }G_{\Xi },
\end{equation*}%
Hence
\begin{equation*}
U_{1}\subset \{x:(f_{\mathbf{i}_{k}}\circ \cdots \circ f_{\mathbf{i}%
_{1}})(x)\in \Lambda \text{ for some }f_{\mathbf{i}_{k}}\circ \cdots \circ
f_{\mathbf{i}_{1}}\},
\end{equation*}%
Since $\{f_{\mathbf{i}_{k}}\circ \cdots \circ f_{\mathbf{i}_{1}}\}_{\mathbf{i%
}_{1}\cdots \mathbf{i}_{k}}$ is a countable family, we obtain that
\begin{equation}
\dim _{H}U_{1}\leq \dim _{H}\Lambda .  \label{222}
\end{equation}%
It follows from (\ref{11})-(\ref{222}) and $\dim _{H}\Lambda =s$ (Lemma \ref%
{L:1}) that
\begin{equation*}
\dim _{H}U_{1}=s
\end{equation*}%
under the covering condition.

(3) Suppose the covering condition and the strong separation condition hold.
It follows from the strong separation condition that basic cubes of rank $k$
are pairwise disjoint. If $x\notin \{\frac{q_{1}}{n^{q_{2}}}%
\}_{q_{1},q_{2}\in \mathbb{Z}}$ and \textbf{Card}$(S_{x})=1$ with the unique
solution $\mathbf{y}$, then for each $k$ there is a unique basic cube $Q_{k}$
of rank $k$ containing $\mathbf{y}$ which means $x\in \Lambda $. Therefore%
\begin{equation}
\Delta (U_{1},\Lambda )\subset \{\frac{q_{1}}{n^{q_{2}}}\}_{q_{1},q_{2}\in
\mathbb{Z}},  \label{1111}
\end{equation}%
where $\Delta (A,B)=(A\backslash B)\cup (B\backslash A).$ The other part of
Theorem \ref{T:1}\ follows from Lemma \ref{L:1} and (\ref{1111}).
\end{proof}

\bigskip

\section{Number of Solutions}

\begin{proof}[Proof of Theorem \protect\ref{T:2}]
$\ $

Suppose
\begin{equation*}
x=i+n^{-1}j_{1}+n^{-2}j_{2}+\cdots +n^{-k}j_{k}+\cdots \in \lbrack m_{\ast
},m^{\ast }]\backslash \{\frac{q_{1}}{n^{q_{2}}}\}_{q_{1},q_{2}\in \mathbb{Z}%
}.
\end{equation*}%
Let \textbf{Card}$(S_{x})=r$ with solution set $\{\mathbf{y}_{1},\cdots ,%
\mathbf{y}_{r}\}.$ Denote by $N_{k}$\ the number of basic cubes of rank $k$
intersecting $\{\mathbf{y}_{1},\cdots ,\mathbf{y}_{r}\}.$ It follows from
the covering condition that%
\begin{equation*}
N_{k+1}\geq N_{k}\text{ for all }k.
\end{equation*}%
Note that $N_{k}\leq r$ for all $k$ due to the strong separation condition.
Then there exists an integer $k_{0}$ such that $N_{k_{0}}=r$ and thus
\begin{equation*}
\text{\textbf{Card}}(S_{x})=e_{i}T_{j_{1}}\cdots T_{j_{k_{0}}}\alpha (\sigma
^{k_{0}}\{x\})=||e_{i}T_{j_{1}}\cdots T_{j_{k_{0}}}||_{1}=r
\end{equation*}%
due to (\ref{2222}) in Lemma \ref{L:c}. Let $(\beta _{m_{\ast }},\cdots
,\beta _{m^{\ast }-1})=e_{i}T_{j_{1}}\cdots T_{j_{k_{0}}}$ and $\mathcal{D}%
=\{p:\beta _{p}\neq 0\}.$ Then $\sigma ^{k_{0}}\{x\}+p\in U_{1}$ for all $%
p\in \mathcal{D}$, that means $\sigma ^{k_{0}}\{x\}+p$ has an infinite
coding in $G_{\Xi }$ for all $p\in \mathcal{D}$. Hence
\begin{equation*}
\{z\in \lbrack 0,1)\backslash \{\frac{q_{1}}{n^{q_{2}}}\}_{q_{1},q_{2}\in
\mathbb{Z}}:(z+p)_{p\in \mathcal{D}}\text{ has an infinite coding in }%
G^{\ast }\}\neq \emptyset .
\end{equation*}

On the other hand, if $r=||e_{i}T_{j_{1}}\cdots T_{j_{k}}||_{1}$ with $%
e_{i}T_{j_{1}}\cdots T_{j_{k}}=(\beta _{m_{\ast }},\cdots ,\beta _{m^{\ast
}-1})$ and $\mathcal{D}=\{p:\beta _{p}\neq 0\}$ satisfying
\begin{equation*}
\{z\in \lbrack 0,1)\backslash \{\frac{q_{1}}{n^{q_{2}}}\}_{q_{1},q_{2}\in
\mathbb{Z}}:(z+p)_{p\in \mathcal{D}}\text{ has an infinite coding in }%
G^{\ast }\}\neq \emptyset .
\end{equation*}%
Then we can construct
\begin{equation*}
x=\left( i+\sum\nolimits_{k=1}^{k_{0}}n^{-k}j_{k}+n^{-k_{0}}z\right) \in
\lbrack 0,1)\backslash \{\frac{q_{1}}{n^{q_{2}}}\}_{q_{1},q_{2}\in \mathbb{Z}%
}
\end{equation*}%
and obtain that \textbf{Card}($S_{x})=r$ using (\ref{2222}) again.
\end{proof}

\medskip

\begin{proof}[Proof of Theorem \protect\ref{T:3}]
$\ $

(1) First we will show that $\dim _{H}U_{r}\leq \frac{\log \rho (M)}{\log n}%
. $

Let \textbf{Card}$(S_{x})=r$ with solution set $\{\mathbf{y}_{1},\cdots ,%
\mathbf{y}_{r}\}.$ Denote by $N_{k}$\ the number of basic cubes of rank $k$
intersecting $\{\mathbf{y}_{1},\cdots ,\mathbf{y}_{r}\}.$ It follows from
the covering condition that%
\begin{equation*}
N_{k+1}\geq N_{k}\text{ for all }k.
\end{equation*}%
Note that $N_{k}\leq 2^{l}r$ for all $k.$ Therefore, there exists an integer
$k_{0}$ such that $N_{k_{0}}=\max_{k}N_{k}.$ Fix a basic cube $S_{\mathbf{i}%
_{1}\cdots \mathbf{i}_{k_{0}}}([0,1]^{l})$ of rank $k_{0}$ intersecting $\{%
\mathbf{y}_{1},\cdots ,\mathbf{y}_{r}\}$, by Lemma \ref{L:why}, we obtain
that
\begin{equation*}
(f_{\mathbf{i}_{k_{0}}}\circ \cdots \circ f_{\mathbf{i}_{1}})(x)\text{ has
an infinite coding of }G_{\Xi }.
\end{equation*}%
Hence
\begin{equation*}
U_{r}\subset \{x:(f_{\mathbf{i}_{k}}\circ \cdots \circ f_{\mathbf{i}%
_{1}})(x)\in \Lambda \text{ for some }f_{\mathbf{i}_{k}}\circ \cdots \circ
f_{\mathbf{i}_{1}}\},
\end{equation*}%
Since $\{f_{\mathbf{i}_{k}}\circ \cdots \circ f_{\mathbf{i}_{1}}\}_{\mathbf{i%
}_{1}\cdots \mathbf{i}_{k}}$ is a countable family, we obtain
\begin{equation*}
\dim _{H}U_{r}\leq \dim _{H}\Lambda =\frac{\log \rho (M)}{\log n}.
\end{equation*}

\medskip

(2) We will obtain the dimension of $U_{r}.$ Suppose $U_{r}$ is uncountable.

Let $x=i+n^{-1}j_{1}+n^{-2}j_{2}+\cdots +n^{-k}j_{k}+\cdots \in \lbrack
m_{\ast },m^{\ast }]\backslash \{\frac{q_{1}}{n^{q_{2}}}\}_{q_{1},q_{2}\in
\mathbb{Z}}.$ Suppose $N_{k}$ is defined as above, by the above discussion,
there exists an integer $k_{0}$ such that $N_{k_{0}}=r$ and thus
\begin{equation}
\text{\textbf{Card}}(S_{x})=e_{i}T_{j_{1}}\cdots T_{j_{k_{0}}}\alpha (\sigma
^{k_{0}}\{x\})=||e_{i}T_{j_{1}}\cdots T_{j_{k_{0}}}||_{1}=r  \label{99}
\end{equation}%
due to (\ref{2222}) in Lemma \ref{L:c}. Let $(\beta _{m_{\ast }},\cdots
,\beta _{m^{\ast }-1})=e_{i}T_{j_{1}}\cdots T_{j_{k_{0}}}$ and $\mathcal{D}%
_{x}=\{p:\beta _{p}\neq 0\}.$ When $x$ is fixed, we use $\mathcal{D}$ to
replace $\mathcal{D}_{x}$ for notational convenience.

Then the above formula (\ref{99})\ implies that%
\begin{equation*}
\sigma ^{k_{0}}\{x\}+p\in U_{1}\text{ if }p\in \mathcal{D}\text{.}
\end{equation*}%
Let $\sigma ^{k_{0}}\{x\}\in n^{-1}[h_{x},h_{x}+1).$ Then $\sigma
^{k_{0}}\{x\}+p\in \Lambda $ has an infinite coding in $G_{\Xi }$ for any $%
p\in \mathcal{D}.$ Let
\begin{equation*}
\mathcal{F}_{\mathcal{D}}=\{\omega _{h}:\omega
_{h}=\{n^{-1}[np+h,np+h+1]\}_{p\in \mathcal{D}}\in \Xi \text{ with }h\in
\lbrack 0,(n-1)]\cap \mathbb{Z}\}.
\end{equation*}%
Suppose $\omega _{h}\in \mathcal{F}_{\mathcal{D}}\ $and $\Omega _{i(h,%
\mathcal{D})}$ is the strongly connected components containing $\omega _{h}.$
If $\sigma ^{k_{0}}\{y\}=z\in n^{-1}[h,h+1),$ using the formula in Lemma \ref%
{L:c} we have%
\begin{equation*}
\text{\textbf{Card}}(S_{y})=e_{i}T_{j_{1}}\cdots T_{j_{k_{0}}}\alpha
(z)=(\beta _{m_{\ast }},\cdots ,\beta _{m^{\ast }-1})\alpha (z)=r,
\end{equation*}%
i.e.,
\begin{equation*}
\{y=i+\sum\nolimits_{k=1}^{k_{0}}n^{-k}j_{k}+n^{-k_{0}}z:z\in \Lambda \cap
\lbrack 0,1)\text{ whenever }p\in \mathcal{D}\}\subset U_{r}.
\end{equation*}%
In fact, $\sigma ^{k_{0}}\{y\}+p\in \Lambda $ for all $p\in \mathcal{D}$
implies that $(z+p)_{_{p\in \mathcal{D}}}$ has an infinite coding in $%
G^{\ast }$ starting from $\omega _{h},$ i.e., $z\in A_{\omega _{h}},$ where $%
A_{\omega _{h}}$ is defined in Lemma \ref{L:A}$.$ Using Lemma \ref{L:A} we
have%
\begin{equation*}
\dim _{H}U_{r}\geq \dim _{H}A_{\omega _{h}}=\max \left\{ \frac{\log \rho
(\Omega _{j})}{\log n}:\Omega _{i(h,\mathcal{D})}\prec \Omega _{j}\right\}
\end{equation*}%
for any $\omega _{h}\in \mathcal{F}_{\mathcal{D}}=\mathcal{F}_{\mathcal{D}%
_{x}}.$ In particular, for $d^{\prime }=\max \left\{ \frac{\log \rho (\Omega
_{j})}{\log n}:\Omega _{i(h,\mathcal{D})}\prec \Omega _{j}\right\} ,$ using
Lemma \ref{L:A} again we have%
\begin{equation}
\mathcal{H}^{d^{\prime }}(U_{r})\geq (n^{-k_{0}})^{d^{\prime }}\mathcal{H}%
^{d^{\prime }}(A_{\omega _{h}})>0\text{ if }d^{\prime }>0.  \label{zzz}
\end{equation}

Let $\mathcal{G}=\{\mathcal{D}_{x}:$\textbf{Card}$(S_{x})=r\}.$ The above
discussion shows that
\begin{equation*}
\text{if }x\in U_{r}\backslash \{\frac{q_{1}}{n^{q_{2}}}\}_{q_{1},q_{2}\in
\mathbb{Z}},\text{ then }\sigma ^{k_{0}}\{x\}\in A_{\omega _{h_{x}}}\text{
for some }k_{0}.
\end{equation*}%
Hence
\begin{eqnarray*}
&&\dim _{H}(U_{r})=\dim _{H}\left( U_{r}\backslash \{\frac{q_{1}}{n^{q_{2}}}%
\}_{q_{1},q_{2}\in \mathbb{Z}}\right) \\
&\leq &\max_{\mathcal{D\in G}}\dim _{H}\left( \bigcup\nolimits_{\omega \in
\mathcal{F}_{\mathcal{D}}}A_{\omega }\right) \\
&\leq &\max_{\mathcal{D\in G}}\max_{\omega _{h}\in \mathcal{F}_{\mathcal{D}%
}}\max \left\{ \frac{\log \rho (\Phi _{j})}{\log n}:\Omega _{i(h,\mathcal{D}%
)}\prec \Omega _{j}\right\} .
\end{eqnarray*}%
i.e.,
\begin{equation}
d_{r}=\dim _{H}U_{r}=\max_{\mathcal{D\in G}}\max_{\omega _{h}\in \mathcal{F}%
_{\mathcal{D}}}\max \left\{ \frac{\log \rho (\Phi _{j})}{\log n}:\Omega
_{i(h,\mathcal{D})}\prec \Omega _{j}\right\} .  \label{a}
\end{equation}%
By (\ref{zzz}) and (\ref{a}), we obtain that
\begin{equation*}
\mathcal{H}^{d_{r}}(U_{r})>0\text{ if }d_{r}>0.
\end{equation*}

It suffices to show that $d_{r}>0.$ In fact, by the above discussion we have
\begin{equation}
\mathcal{U}_{r}\subset \bigcup\limits_{\mathcal{D\in G}}\bigcup\limits_{%
\omega _{h}\in \mathcal{F}_{\mathcal{D}}}\bigcup\limits_{(i,j_{1},\cdots
j_{k_{0}})}\{y=i+\sum\nolimits_{k=1}^{k_{0}}n^{-k}j_{k}+n^{-k_{0}}z:z\in
A_{\omega _{h}}\},  \label{abb}
\end{equation}%
where $\mathcal{U}_{r}=U_{r}\backslash \{\frac{q_{1}}{n^{q_{2}}}%
\}_{q_{1},q_{2}\in \mathbb{Z}}.$ Note that
\begin{equation*}
d_{r}=\max_{\mathcal{D\in G}}\max_{\omega _{h}\in \mathcal{F}_{\mathcal{D}%
}}\dim _{H}(A_{\omega _{h}}).
\end{equation*}%
If $d_{r}=0,$ then $\dim _{H}(A_{\omega _{h}})=0$ for all $\omega _{h}$
which implies that $A_{\omega _{h}}$ is countable for all $\omega _{h}$ due
to Claim 1. Using (\ref{abb}) we obtain that $U_{r}$ is countable, this is a
contradiction.

\medskip

(3) Now we will obtain the infinity of Hausdorff measure $\mathcal{H}%
^{d_{r}}(U_{r})$ when
\begin{equation*}
\mathcal{U}_{r}=U_{r}\backslash \{\frac{q_{1}}{n^{q_{2}}}\}_{q_{1},q_{2}\in
\mathbb{Z}}\neq \emptyset \text{ with }r\geq 2
\end{equation*}%
and $[m_{\ast },m^{\ast }]$ is dominated by $\Xi $ for $d_{r}.$ Take $x\in
\mathcal{U}_{r}$ and $\omega _{h}\in \mathcal{F}_{\mathcal{D}_{x}}$ such that%
\begin{equation*}
\max \left\{ \frac{\log \rho (\Omega _{j})}{\log n}:\Omega _{i(h,\mathcal{D}%
)}\prec \Omega _{j}\right\} =d_{r},
\end{equation*}%
where $\Omega _{i(h,\mathcal{D})}$ is the strongly connected component
containing $\omega _{h}.$ It follows from Lemma \ref{L:A} that%
\begin{equation*}
\mathcal{H}^{d_{r}}(A_{\omega _{h}})>0.
\end{equation*}%
Since $[m_{\ast },m^{\ast }]$ is dominated by $\Xi $ for $d_{r},$ we note
that for $x=i+\sum\nolimits_{k=1}^{k_{0}}n^{-k}j_{k}+\cdots \in J_{i}$ as
above$,$ we take a strongly connected component $\Xi _{j}$\ of $G_{\Xi }$
with $\frac{\log \rho (\Xi _{j})}{\log n}\geq d_{r}$ such that there is a
path $\mathbf{P}^{\prime }$ in $G_{\Xi }$ from some $I\in \Xi _{j}$ to an
integer interval $I^{\prime }\in \Xi $ of type $i.$ Let
\begin{equation*}
\mathcal{C}_{k}=\{\mathbf{P}:\text{ }\mathbf{P}\text{ is a path in }\Xi _{j}%
\text{ ending at }I\text{ and }|\mathbf{P}|=k\},
\end{equation*}%
where $|\mathbf{P}|$ denotes the length of the path. Then we obtain a family
$\{\mathcal{B}_{k}\}_{k}$ of pairwise disjoint subsets of $U_{r}$ as follows
\begin{equation*}
\mathcal{B}_{k}=\left\{ \text{ }\mathbf{PP}^{\prime
}(i+\sum\nolimits_{k=1}^{k_{0}}n^{-k}j_{k}+n^{-k_{0}}z):\mathbf{P}\in
\mathcal{C}_{k}\text{ and }z\in A_{\omega _{h}}\right\} \text{ for }%
k=1,2,\cdots ,
\end{equation*}%
where the coding $\mathbf{PP}^{\prime }x_{1}$ denotes a real number which is
turned\ to be $x_{1}\in \lbrack t,t+1]$ through a series of expanding maps
with respect to the path $\mathbf{PP}^{\prime }$ in $\Xi $ and the expanding
map according to $I^{\prime }\rightarrowtail J_{i}.$ Hence
\begin{equation}
\mathcal{H}^{d_{r}}(\mathcal{B}_{k})\geq \text{\textbf{Card}}(\mathcal{C}%
_{k})(n^{-(k+k_{0})d_{r}}\mathcal{H}^{d_{r}}(A_{\omega _{h}})),  \label{gggg}
\end{equation}%
and there is a constant $c>0$ such that%
\begin{equation}
\text{\textbf{Card}}(\mathcal{C}_{k})\geq c\rho (\Xi _{j})^{k}\geq
cn^{d_{r}k}.  \label{22}
\end{equation}%
Here we can check \textbf{Card}$(\mathcal{C}_{k})\geq c\rho (\Xi _{j})^{k}$
in (\ref{22}), because by the irreducibility we can take a\ Perron-Frobenius
eigenvector $\gamma $ of the matrix $M_{j}$ w.r.t. $\Xi _{j}$ such that $%
||\gamma ||_{1}=1,$ and thus
\begin{equation*}
\text{\textbf{Card}}(\mathcal{C}_{k})\geq c||M_{j}^{k}(1,\cdots
,1)^{T}||_{1}\geq c||M_{j}^{k}\gamma ||_{1}=c||\rho (\Xi _{j})^{k}\gamma
||_{1}\geq c\rho (\Xi _{j})^{k}.
\end{equation*}%
It follows from (\ref{gggg}) and (\ref{22}) that there is a constant $%
c^{\prime }>0$ such that%
\begin{equation*}
\mathcal{H}^{d_{r}}(\mathcal{B}_{k})\geq c^{\prime }\text{ for all }k.
\end{equation*}%
Hence
\begin{equation*}
\mathcal{H}^{d_{r}}(U_{r})\geq \mathcal{H}^{d_{r}}\left( \bigcup_{k}\mathcal{%
B}_{k}\right) \geq \sum\nolimits_{k}\mathcal{H}^{d_{r}}\left( \mathcal{B}%
_{k}\right) =+\infty .
\end{equation*}
\end{proof}

\bigskip

\section{Examples}

In this section, we give Examples 3 and 4 to illustrate Theorem \ref{T:1}
and Examples 5 and 6 to interpret the definition of the matrices $%
\{T_{j}\}_{j=0}^{n-1}.$

\begin{figure}[tbph]
\centering\includegraphics[width=0.7\textwidth]{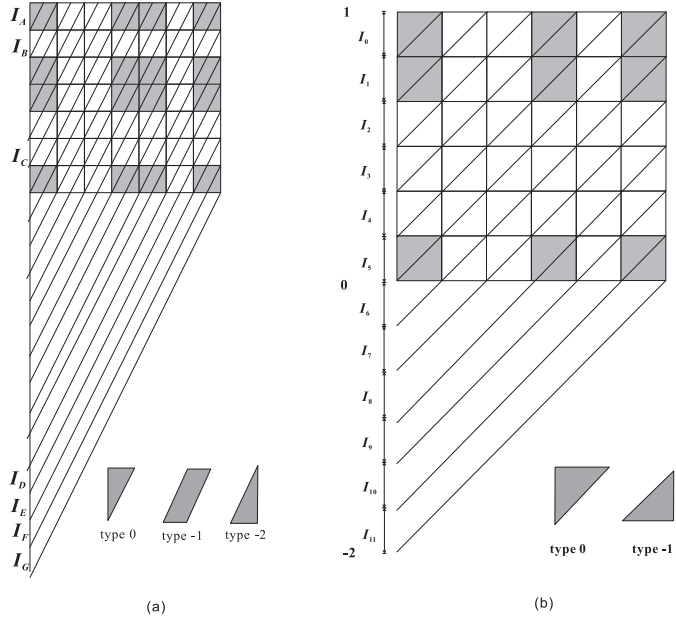} \vspace{-0.3cm}
\caption{Examples 3 and 4}
\end{figure}

\begin{example}
As in part (a) of Fig. 3, let us consider the equation $x=-2b_{1}+b_{2}$
with $b_{i}\in K=\cup _{a\in \{0,3,4,6\}}\frac{K+a}{7},$ where $%
(m_{1},m_{2})=(-2,1)$ with $m_{\ast }=-2$ and $m^{\ast }=1.$ The covering
condition holds. We have $\Xi =\{I_{A},I_{B},I_{C},I_{D},I_{E},I_{F},I_{G}\}$
and the transition matrix
\begin{equation*}
M=\left(
\begin{array}{ccccccc}
1 & 1 & 1 & 0 & 0 & 0 & 0 \\
0 & 0 & 0 & 0 & 0 & 0 & 0 \\
0 & 0 & 0 & 1 & 1 & 1 & 1 \\
0 & 0 & 0 & 1 & 1 & 1 & 1 \\
1 & 1 & 1 & 0 & 0 & 0 & 0 \\
0 & 0 & 0 & 0 & 0 & 0 & 0 \\
0 & 0 & 0 & 1 & 1 & 1 & 1%
\end{array}%
\right) \ \text{with }\rho (M)=\frac{\sqrt{5}+3}{2}.
\end{equation*}%
Then $\dim _{H}U_{1}=\frac{\log \frac{\sqrt{5}+3}{2}}{\log 7}=s$. In this
example, the spectral radius is not an integer and $\dim _{H}U_{1}<\dim
_{H}K=\frac{\log 4}{\log 7}.$
\end{example}

\begin{example}
As in part (b) of Fig. 3, let us consider the equation $x=-b_{1}+b_{2}$ with
$b_{1}\in K_{1}=\cup _{a\in \{0,3,5\}}\frac{K_{1}+a}{6}$ and $b_{2}\in
K_{2}=\cup _{a\in \{0,4,5\}}\frac{K_{2}+a}{6},$ where $(m_{1},m_{2})=(-1,1)$
with $m_{\ast }=-1$ and $m^{\ast }=1.$ The covering condition and the strong
separation condition hold. We have $\Xi
=\{I_{0},I_{2},I_{3},I_{7},I_{8},I_{9},I_{10},I_{11}\}$ and the transition
matrix
\begin{equation*}
M=\left(
\begin{array}{cccccccc}
1 & 1 & 1 & 0 & 0 & 0 & 0 & 0 \\
0 & 0 & 0 & 1 & 1 & 1 & 1 & 1 \\
1 & 1 & 1 & 0 & 0 & 0 & 0 & 0 \\
0 & 0 & 0 & 1 & 1 & 1 & 1 & 1 \\
1 & 1 & 1 & 0 & 0 & 0 & 0 & 0 \\
0 & 0 & 0 & 1 & 1 & 1 & 1 & 1 \\
1 & 1 & 1 & 0 & 0 & 0 & 0 & 0 \\
0 & 0 & 0 & 1 & 1 & 1 & 1 & 1%
\end{array}%
\right) \ \text{with }\rho (M)=4.
\end{equation*}%
Then $\dim _{H}U_{1}=\frac{\log 4}{\log 6}=s$ and $0<\mathcal{H}%
^{s}(U_{1})<\infty $ since $M$ is irreducible. Notice that in this example $%
\dim _{H}U_{1}>\max \{\dim _{H}K_{1},\dim _{H}K_{2}\}=\frac{\log 3}{\log 6}.$
\end{example}

\begin{figure}[tbph]
\centering\includegraphics[width=0.75\textwidth]{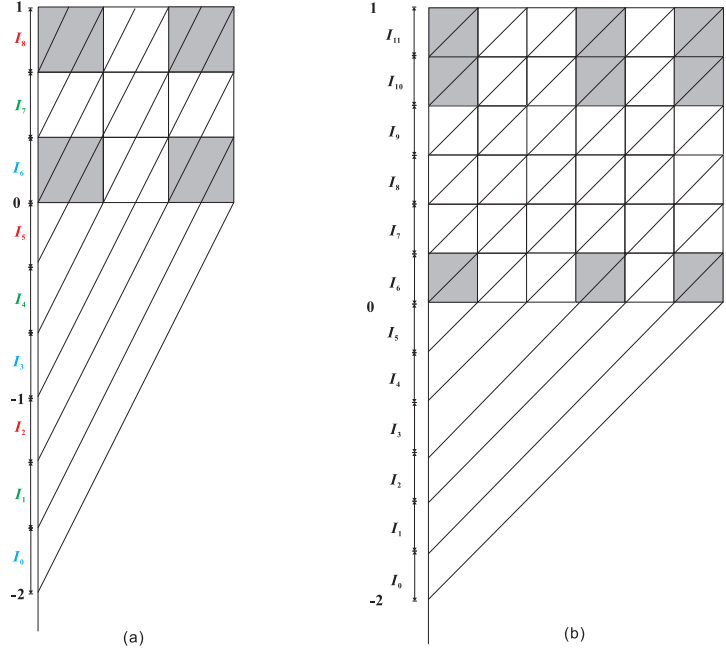} \vspace{-0.3cm}
\caption{Examples 5 and 6}
\end{figure}

\begin{example}
Consider $-2C+C$ as in part (a) of Fig. 4. We have
\begin{equation*}
T_{0}=%
\begin{array}{cc}
&
\begin{array}{ccc}
\!J_{-2} & \!\!J_{-1} & \!\!\!J_{0}%
\end{array}
\\
\begin{array}{c}
I_{0} \\
I_{3} \\
I_{6}%
\end{array}
& \left(
\begin{array}{ccc}
1 & \text{ }0 & \text{ }0 \\
0 & \text{ }1 & \text{ }0 \\
1 & \text{ }0 & \text{ }1%
\end{array}%
\right)%
\end{array}%
,T_{1}=%
\begin{array}{cc}
&
\begin{array}{ccc}
\!J_{-2} & \!\!J_{-1} & \!\!\!J_{0}%
\end{array}
\\
\begin{array}{c}
I_{1} \\
I_{4} \\
I_{7}%
\end{array}
& \left(
\begin{array}{ccc}
0 & \text{ }1 & \text{ }0 \\
1 & \text{ }0 & \text{ }1 \\
0 & \text{ }1 & \text{ }0%
\end{array}%
\right)%
\end{array}%
\end{equation*}%
and $T_{2}=%
\begin{array}{cc}
&
\begin{array}{ccc}
\!J_{-2} & \!\!J_{-1} & \!\!\!J_{0}%
\end{array}
\\
\begin{array}{c}
I_{2} \\
I_{5} \\
I_{8}%
\end{array}
& \left(
\begin{array}{ccc}
1 & \text{ }0 & \text{ }1 \\
0 & \text{ }1 & \text{ }0 \\
0 & \text{ }0 & \text{ }1%
\end{array}%
\right)%
\end{array}%
.$
\end{example}

\begin{example}
Consider $-K_{1}+K_{2}$ for $K_{1}=\cup _{a\in \{0,3,5\}}\frac{K_{1}+a}{6}$
and $K_{2}=\cup _{a\in \{0,4,5\}}\frac{K_{2}+a}{6}$\ as in part (b) of Fig.
4. We have
\begin{eqnarray*}
T_{0} &=&%
\begin{array}{cc}
&
\begin{array}{cc}
\!\!J_{-1} & \!\!\!J_{0}%
\end{array}
\\
\begin{array}{c}
I_{0} \\
I_{6}%
\end{array}
& \left(
\begin{array}{cc}
1 & \text{ }0 \\
1 & \text{ }2%
\end{array}%
\right)%
\end{array}%
,\text{ }T_{1}=%
\begin{array}{cc}
&
\begin{array}{cc}
\!\!J_{-1} & \!\!\!J_{0}%
\end{array}
\\
\begin{array}{c}
I_{1} \\
I_{7}%
\end{array}
& \left(
\begin{array}{cc}
0 & \text{ }1 \\
1 & \text{ }1%
\end{array}%
\right)%
\end{array}%
, \\
T_{2} &=&%
\begin{array}{cc}
&
\begin{array}{cc}
\!\!J_{-1} & \!\!\!J_{0}%
\end{array}
\\
\begin{array}{c}
I_{2} \\
I_{8}%
\end{array}
& \left(
\begin{array}{cc}
1 & \text{ }0 \\
0 & \text{ }1%
\end{array}%
\right)%
\end{array}%
,\text{ }T_{3}=%
\begin{array}{cc}
&
\begin{array}{cc}
\!\!J_{-1} & \!\!\!J_{0}%
\end{array}
\\
\begin{array}{c}
I_{3} \\
I_{9}%
\end{array}
& \left(
\begin{array}{cc}
0 & \text{ }1 \\
1 & \text{ }0%
\end{array}%
\right)%
\end{array}%
, \\
T_{4} &=&%
\begin{array}{cc}
&
\begin{array}{cc}
\!\!J_{-1} & \!\!\!J_{0}%
\end{array}
\\
\begin{array}{c}
I_{4} \\
I_{10}%
\end{array}
& \left(
\begin{array}{cc}
1 & \text{ }0 \\
1 & \text{ }1%
\end{array}%
\right)%
\end{array}%
,\text{ }T_{5}=%
\begin{array}{cc}
&
\begin{array}{cc}
\!\!J_{-1} & \!\!\!J_{0}%
\end{array}
\\
\begin{array}{c}
I_{5} \\
I_{11}%
\end{array}
& \left(
\begin{array}{cc}
2 & \text{ }1 \\
0 & \text{ }1%
\end{array}%
\right)%
\end{array}%
.
\end{eqnarray*}
\end{example}

\bigskip

\section{Final remarks}

The covering condition is essential in this paper. Without this condition,
it is much more difficult to analyze the set $U_{r}$. We shall discuss this
case in another paper. Our main ideas may be implemented in the setting of
some overlapping self-similar sets. Nevertheless, the discussion is more
complicated. We consider only the addition or subtraction on the Cantor
sets. It is natural to consider similar problems for the multiplication or
division on self-similar sets, for instance, the set of points with unique
representations. Finally, our main results in this paper can be restated
from the slicing point of view.

\bigskip


\begin{thebibliography}{99}
\bibitem{Akiyama} Shigeki Akiyama and Klaus Scheicher. \newblock From number
systems to shift radix systems. \newblock {\em Nihonkai Math. J.},
16(2):95--106, 2005.

\bibitem{Astels} Astels Steve. \newblock Cantor sets and numbers with
restricted partial quotients. \newblock {\em Trans. Amer. Math. Soc.},
352(1):133--170, 2000.

\bibitem{BakerG} Simon Baker. \newblock Generalized golden ratios over
integer alphabets. \newblock {\em Integers}, 14:Paper No. A15, 28, 2014.

\bibitem{KarmaCor1} Jose Barrionuevo, Robert~M. Burton, Karma Dajani, and
Cor Kraaikamp. \newblock Ergodic properties of generalized {L}\"uroth
series. \newblock {\em Acta Arith.}, 74(4):311--327, 1996.

\bibitem{KM} Karma Dajani and Martijn de~Vries. \newblock Measures of
maximal entropy for random {$\beta$}-expansions.
\newblock {\em J. Eur.
Math. Soc. (JEMS)}, 7(1):51--68, 2005.

\bibitem{DajaniDeVrie} Karma Dajani and Martijn de~Vries. \newblock %
Invariant densities for random {$\beta$}-expansions.
\newblock {\em J. Eur.
Math. Soc. (JEMS)}, 9(1):157--176, 2007.

\bibitem{DJKL} Karma Dajani, Kan Jiang, Derong Kong, and Wenxia Li. %
\newblock Multiple expansions of real numbers with digits set $\{0,1,q\}$. %
\newblock {\em To appear in Math.Z.}, 2018.

\bibitem{KarmaKan2} Karma Dajani, Kan Jiang, Derong Kong, and Wenxia Li. %
\newblock Multiple codings for self-similar sets with overlaps. \newblock
\emph{arXiv:1603.09304}, 2016.

\bibitem{KCor} Karma Dajani and Cor Kraaikamp.
\newblock {\em Ergodic theory
of numbers}, volume~29 of \emph{Carus Mathematical Monographs}. \newblock %
Mathematical Association of America, Washington, DC, 2002.

\bibitem{KarmaOomen} Karma Dajani and Margriet Oomen. \newblock Random {$N$}%
-continued fraction expansions. \newblock {\em J. Approx. Theory},
227:1--26, 2018.

\bibitem{MK} Martijn de~Vries and Vilmos Komornik. \newblock Unique
expansions of real numbers. \newblock {\em Adv. Math.}, 221(2):390--427,
2009.

\bibitem{EHJ} P.~Erd{\H{o}}s, M.~Horv{\'a}th, and I.~Jo{\'o}. \newblock On
the uniqueness of the expansions {$1=\sum q^{-n_i}$}.
\newblock {\em Acta
Math. Hungar.}, 58(3-4):333--342, 1991.

\bibitem{GS} Paul Glendinning and Nikita Sidorov. \newblock Unique
representations of real numbers in non-integer bases.
\newblock {\em Math.
Res. Lett.}, 8(4):535--543, 2001.

\bibitem{Hochman2012} Michael Hochman and Pablo Shmerkin. \newblock Local
entropy averages and projections of fractal measures.
\newblock {\em Ann. of
Math. (2)}, 175(3):1001--1059, 2012.

\bibitem{Kan2018} Kan Jiang and Karma Dajani. \newblock Subshifts of finite
type and self-similar sets. \newblock {\em Nonlinearity}, 30(2):659--686,
2017.

\bibitem{Hall} Marshall Hall, Jr. \newblock On the sum and product of
continued fractions. \newblock {\em Ann. of Math. (2)}, 48:966--993, 1947.

\bibitem{KO} Vilmos Komornik. \newblock Expansions in noninteger bases. %
\newblock {\em Integers}, 11B:Paper No. A9, 30, 2011.

\bibitem{KKLL2017} Vilmos Komornik, Derong Kong, and Wenxia Li. \newblock %
Hausdorff dimension of univoque sets and devil's staircase.
\newblock {\em
Adv. Math.}, 305:165--196, 2017.

\bibitem{LXZ} Qinghui Liu, Lifeng Xi, Yanfen Zhao. Dimensions of
intersection of the Sierpinski carpet with lines of rational slopes,
50:411--428, 2007.

\bibitem{Mauldin} R. Daniel Mauldin, Stanley C. Williams. Hausdorff
dimension in graph directed constructions. Trans. Amer. Math. Soc.
309(1-2):811--839 1988.

\bibitem{Parry} William Parry. \newblock On the {$\beta $}-expansions of
real numbers. \newblock {\em Acta Math. Acad. Sci. Hungar.}, 11:401--416,
1960.

\bibitem{PS} Yuval Peres and Pablo Shmerkin. \newblock Resonance between {C}%
antor sets. \newblock {\em Ergodic Theory Dynam. Systems}, 29(1):201--221,
2009.

\bibitem{Renyi} Alfr{\'e}d R{\'e}nyi. \newblock Representations for real
numbers and their ergodic properties.
\newblock {\em Acta Math. Acad. Sci.
Hungar}, 8:477--493, 1957.

\bibitem{Sidorov} Nikita Sidorov. \newblock Almost every number has a
continuum of {$\beta$}-expansions. \newblock {\em Amer. Math. Monthly},
110(9):838--842, 2003.

\bibitem{Sidorov2003} Nikita Sidorov. \newblock Universal {$\beta$}%
-expansions. \newblock {\em Period. Math. Hungar.}, 47(1-2):221--231, 2003.

\bibitem{SN} Nikita Sidorov. \newblock Expansions in non-integer bases:
lower, middle and top orders. \newblock {\em J. Number Theory},
129(4):741--754, 2009.

\bibitem{HS} Hugo Steinhuas. \newblock Mowa W{\l }asno\'{s}\'{c} Mnogo\'{s}%
ci Cantora.
\newblock {\em Wector, 1-3. English translation in: STENIHAUS,
H.D.} 1985.

\bibitem{XiKan1} Li~Tian, Jiangwen Gu, Qianqian Ye, Li-Feng Xi, and Kan
Jiang. \newblock Multiplication on self-similar sets with overlaps. %
\newblock {\em arXiv:1807.05368}, 2018.

\bibitem{Tyson} Jayadev S.Athreya, Bruce Reznick, and Jeremy T.Tyson. %
\newblock Cantor set arithmetic.
\newblock {\em To appear in American
Mathematical Monthly}, 2018.

\bibitem{WX} Zhiying Wen, Lifeng Xi. On the dimensions of sections for the
graph-directed sets, Annales Academiae Scientiarum Fennicae Mathematica, 35:
515--535, 2010.

\bibitem{Xi} Lifeng Xi. Rectifiable sub-surfaces of self-similar sets with
cubic patterns, submitted, 2018.
\end{thebibliography}
\end{document}